\theoremstyle{plain}
\newtheorem{thm}{Theorem}[section]
\newtheorem{lem}[thm]{Lemma}
\newtheorem{prop}[thm]{Proposition}
\theoremstyle{definition}
\newtheorem{defn}{Definition}[section]
\newtheorem*{claim*}{Claim}
\theoremstyle{remark}
\newtheorem{rem}{Remark}
\newtheorem*{eg*}{Example}
\newcommand{\url}[1]{\texttt{#1}}
\newcommand{\R}{\mathbb{R}}
\newcommand{\Q}{\mathbb{Q}}
\newcommand{\C}{\mathbb{C}}
\newcommand{\F}{\mathbb{F}}
\newcommand{\N}{\mathbb{N}}
\newcommand{\Z}{\mathbb{Z}}
\newcommand{\T}{\mathbb{T}}
\newcommand{\cB}{\mathcal{B}}
\newcommand{\abs}[1]{\left\lvert #1 \right\rvert}
\newcommand{\norm}[1]{\left\lVert #1 \right\rVert}
\newcommand{\set}[2]{\left\{#1 \ : \ #2\right\}}
\DeclareMathOperator{\id}{id}
\DeclareMathOperator{\subspan}{span}
\newcommand\restr[2]{{
  \left.\kern-\nulldelimiterspace 
  #1 
  \vphantom{\big|} 
  \right|_{#2} 
  }}
\begin{document}

\title{On Irreducibility of Oseledets Subspaces}
\author{Christopher Bose}
\email{cbose@uvic.ca}
\author{Joseph Horan}
\email{jahoran@uvic.ca}
\author{Anthony Quas}
\email{aquas@uvic.ca}
\address{Department of Mathematics and Statistics, University of 
Victoria, PO BOX 3045 STN CSC, Victoria, B.C., V8W 3R4, Canada}
\thanks{All threee authors are supported by grants from the Natural 
Sciences and Engineering Research Council of Canada.}
\date{\today}

\begin{abstract}
For a cocycle of invertible real $n$-by-$n$ matrices, the Multiplicative 
Ergodic Theorem gives an Oseledets subspace decomposition of $\R^n$; that is, 
above each point in the base space, $\R^n$ is written as a 
direct sum of equivariant subspaces, one for each Lyapunov exponent of the cocycle. 
It is natural to ask if these summands may be further decomposed into equivariant subspaces;
that is, if the Oseledets subspaces are reducible. 
We prove a theorem yielding sufficient conditions for irreducibility of 
the trivial equivariant subspaces $\R^2$ and $\C^2$ for $O_2(\R)$-valued
cocycles and give explicit examples where the conditions are satisfied.
\end{abstract}

\maketitle{}

\vskip 1ex

\textbf{Keywords:} Oseledets subspaces, multiplicative ergodic theorem, 
reducible matrix cocycles.


\section{Introduction}
\label{sec:intro}

The Multiplicative Ergodic Theorem (MET) has a rich history of generalizations 
and variations; there are versions of the theorem in many different situations 
(see Froyland, Lloyd, Quas \cite{flq2} for a brief survey).
The original theorem, by Oseledets \cite{oseledets}, 
obtained a splitting of $\R^n$ into equivariant subspaces, each subspace 
corresponding to a different Lyapunov exponent of the differentiable matrix 
cocycle; these splittings (and generalizations thereof) are now called Oseledets splittings. 

These splittings play a very important role in the study of hyperbolic and 
non-uniformly hyperbolic dynamical systems, giving the tangent spaces for 
invariant sub-manifolds. Coming from differential equations, the exponential 
dichotomy, or Sacker-Sell spectrum, gives a splitting of $\R^n$ into equivariant 
subspaces with uniform gaps between exponential growth rates 
\cite{sacker_sell}. A natural question is that of reducibility of an 
equivariant family of subspaces: when can one find a lower-dimensional 
equivariant subspace inside a given one. One can check that the subspaces
appearing in the Sacker-Sell decomposition are a sum of subspaces appearing
in the Oseledets decomposition. It can happen that the (non-uniform) Oseledets decomposition
strictly refines the uniform Sacker-Sell decomposition 
\cite{john_palm_sell}, even in cases where the Sacker-Sell decomposition is 
trivial \cite{johnson_eg}. Bochi \cite{bochi_generic} has shown 
that generically for a matrix cocycle over a minimal base, the two decompositions 
coincide. Since these are results about reducibility of the Sacker-Sell decomposition, 
it is natural to ask further about reducibility of the Oseledets decomposition.

One way to look at Oseledets splittings, for the case of real 
invertible matrix cocycles, is to use the notion of \emph{block diagonalization}
of the cocycle. From the MET, we get a splitting of $\R^n$ which is equivariant; 
this equivariance property may be used to show that the cocycle is cohomologous 
to one which is block diagonal. The column vectors of the new cocycle exactly 
correspond to basis vectors of the subspaces in the splitting, and the blocks 
correspond to the subspaces. We then see that refinement of the Oseledets 
splitting, in the sense of further decomposing the subspaces, is equivalent 
to showing that the cocycle is cohomologous to a cocycle with a more 
refined block structure.

One of the key steps in the original proof of the Oseledets' theorem 
(\cite{oseledets}, in Russian, or \cite{pesinsmth}, by Barreira and Pesin) is 
to construct a related \emph{triangular} cocycle over an extended base space, 
thereby reducing the computation to the simpler case of triangular cocycles. 
Other parts of the theorem are obtained, however, using the original 
(non-triangularized) cocycle. A triangular cocycle is in some ways simpler 
than a non-triangularized cocycle, in that there is a flag of nested equivariant 
subspaces for the cocycle. In the two-dimensional case, a cocycle is irreducible 
if and only if it is not triangularizable.

More recently, Arnold, Cong, and Oseledets showed in \cite{ano_normal} that 
any real matrix cocycle over an invertible ergodic map, not just those satisfying 
the log-integrability condition of the MET, may be put into an equivariant form 
with blocks on the diagonals which are block-conformal, nothing below those
large blocks, and arbitrary elements above those blocks; they call this is a 
Jordan normal form. With another condition, the elements above the diagonal 
may be removed, so that the form is block-diagonal.
In the setting of $2$-by-$2$ real invertible matrix cocycles, Thieullen in \cite{thieullen} 
gave a classification of possible cocycles analogous to the classification of M\"obius 
transformations. 

In this paper, we consider $O_2(\R)$-valued cocycles, so that the only Lyapunov exponent is 0
and the decomposition arising from the Oseledets theorem is trivial. We exploit the
near-commutativity of $O_2$ to relate reducibility
of the trivial equivariant subspaces $\R^2$ and $\C^2$ to the ergodic properties of a pair of 
skew product extensions of the base dynamical system, and give sufficient conditions for 
irreducibility of the subspaces, both over the reals and over the complex numbers. 

The rationale for considering reducibility over the complex numbers is that even for 
a single matrix (the case where the underlying dynamics is a single fixed point), not
every matrix can be triangularized over the reals, while it can always be triangularized
over the complex numbers. 

Using our criteria, we build two simple examples of $O_2(\R)$-valued cocycles: one with the property that
the trivial complex equivariant subspace $\C^2$ is reducible, but the real subspace $\R^2$ is irreducible, 
and the other with the property that both $\C^2$ and $\R^2$ are irreducible.

We thank Kening Lu for bringing this problem to our attention.

\section{Preliminaries and Formulation}
\label{sec:prelims}

Throughout this section, we use $\F$ to refer to either $\R$ or $\C$. 
Any measurability requirements for matrix-valued functions are with 
respect to the Borel $\sigma$-algebra generated by the usual norm topology. 
Measurability of subspace-valued functions is with respect to the appropriate 
Grassmannian.

\begin{defn}[Invertible Matrix Cocycle]
Let $(X,\mathcal{B},\mu,T)$ be a dynamical system, where $T$ is an invertible 
measure-preserving transformation, and $A_0:X\to GL_d(\F)$ be measurable. 
Define the \emph{cocycle} $A: \Z\times X \to GL_d(\F)$ by: 
\begin{gather*}
A(1,x) = A_0(x), \\
A(0,x) = I, \\
A(n,x) = A_0(T^{n-1}(x)) \cdots A_0(T(x)) A_0(x), \\ 
A(-n,x) = A_0(T^{-n}(x))^{-1} \cdots A_0(T^{-1}(x))^{-1},
\end{gather*} 
for all $n\in\Z,\ n>0$. One easily checks that $A(n+m,x) = A(m,T^n(x)) A(n,x),$ 
for all $n,m\in\Z$. $A_0$ is the generator for the cocycle, and we often 
use the same letter for the cocycle and its generator.
\end{defn}

\begin{defn}[Equivariant Family of Subspaces]
Let $A$ be an invertible matrix cocycle over $(X,\cB,\mu,T)$. The measurable 
function $x \mapsto V(x)$ is called an \emph{equivariant family of subspaces} 
for A if there exists a $T$-invariant set of full measure $\tilde{X} \subset X$ 
such that for all $x\in \tilde{X}$, $A(1,x)V(x) = V(T(x))$.
\end{defn}

We note that an equivariant family of subspaces is equivalent to an invariant
measurable vector space bundle: the bundle $\{(x,V(x))\colon x\in \tilde X\}$
is invariant under $T_A(x,V)=(T(x),A(1,x)(V))$. 

Note that when we speak of invariant vector bundles, for the remainder of the 
paper, we are referring to \emph{measurable} invariant vector bundles.

\begin{defn}[Reducible Bundle]
Let $A$ be an invertible matrix cocycle, and let $V$ be an invariant measurable vector
bundle over $X$. We say that $V$ is \emph{reducible} if there exists an invariant 
vector bundle $W$ over $X$ such that for $\mu$-a.e.\ $x$,
$\{0\} \subsetneq W(x) \subsetneq V(x)$. 
\end{defn}

We recall the statement of the MET.

\begin{thm}[Multiplicative Ergodic Theorem for Invertible Matrix Cocycles]
\label{thm:met}
Let $(X,\mathcal{B},\mu,T)$ be an invertible and ergodic measure-preserving system, 
and let $A: \Z\times X \to GL_d(\R)$ be an invertible matrix cocycle. Suppose that $A$ 
satisfies 
\[
\int_X \log^+(\norm{A(1,x)})\ d\mu < \infty,
\quad \int_X \log^+(\norm{A(1,x)^{-1}})\ d\mu < \infty.
\]
Then there exist real numbers $\lambda_1 > \lambda_2 > \dots > \lambda_k > -\infty$, 
positive integers $m_1, \dots , m_k$ with $m_1 + \dots + m_k = d$, a subset $X_0$
with $\mu(X_0)=1$ and 
measurable families of subspaces $V_i(x)$ such that:
\begin{enumerate}
\item \textbf{Equivariance:}
For each $x\in X_0$, $A(1,x)V_i(x)=V_i(T(x))$ for $1\le i\le k$;
\item \textbf{Growth:}
For each $x\in X_0$, $1\le i\le k$ and each non-zero $v\in V_i(x)$, 
$\frac 1n\log\|A(n,x)v\|\to\lambda_i$ as $n\to\pm\infty$.
\end{enumerate}
\end{thm}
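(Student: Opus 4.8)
The plan is to follow the singular-value approach of Raghunathan to the MET. First I would pass to exterior powers: for each $1\le p\le d$ the $p$-th exterior power $\wedge^p A$ is again an invertible matrix cocycle, and since $\norm{\wedge^p A(1,x)}\le\norm{A(1,x)}^p$ (and similarly for the inverse) it inherits the two log-integrability bounds. Applying Kingman's subadditive ergodic theorem to the subadditive sequence $x\mapsto\log\norm{\wedge^p A(n,x)}$ and using ergodicity gives constants $\gamma_p$ with $\tfrac1n\log\norm{\wedge^p A(n,x)}\to\gamma_p$ for $\mu$-a.e.\ $x$, with $\gamma_0:=0$. Reading off the increments $\gamma_p-\gamma_{p-1}$, which one checks are non-increasing in $p$, and grouping equal values produces the candidate exponents $\lambda_1>\dots>\lambda_k$ and multiplicities $m_1,\dots,m_k$; integrability of $\log\norm{A(1,x)^{-1}}$ forces $\gamma_d>-\infty$, hence $\lambda_k>-\infty$.

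The analytic heart is to show that the positive-definite symmetric matrices $\Delta_n(x):=\bigl(A(n,x)^{T}A(n,x)\bigr)^{1/(2n)}$ converge, for $\mu$-a.e.\ $x$, to a positive-definite matrix $\Delta(x)$ whose eigenvalues are $e^{\lambda_1},\dots,e^{\lambda_k}$ with multiplicities $m_1,\dots,m_k$. Convergence of the eigenvalues of $\Delta_n(x)$ --- the normalized singular values of $A(n,x)$ --- follows from the exterior-power limits above; the delicate point is convergence of the eigenspaces, for which I would use the spectral gap between consecutive blocks of singular values, a perturbation estimate, and the cocycle identity $A(n+m,x)=A(m,T^nx)A(n,x)$ to compare scales $n$ and $n+m$, thereby showing that the span of the top $j$ right-singular directions of $A(n,x)$ is Cauchy in the appropriate Grassmannian. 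Measurability of the limiting objects follows from measurability of $A_0$ and continuity of the matrix operations involved. Writing $E_i(x)$ for the eigenspace of $\Delta(x)$ associated to $e^{\lambda_i}$, set $U_i(x):=E_i(x)\oplus\dots\oplus E_k(x)$, so that $\R^d=U_1(x)\supsetneq\dots\supsetneq U_k(x)\supsetneq U_{k+1}(x)=\{0\}$. A second-moment estimate $\norm{A(n,x)v}^2=\langle A(n,x)^{T}A(n,x)v,v\rangle$ together with the convergence of $\Delta_n(x)$ shows that $\tfrac1n\log\norm{A(n,x)v}\to\lambda_i$ as $n\to+\infty$ precisely for $v\in U_i(x)\setminus U_{i+1}(x)$; and since $A(1,x)$ is a.e.\ an invertible matrix with $\log\norm{A(1,x)^{\pm1}}$ finite, it preserves forward growth rates, whence $A(1,x)U_i(x)=U_i(Tx)$.

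Finally I would run the same construction for the time-reversed cocycle generated by $x\mapsto A_0(T^{-1}x)^{-1}$, obtaining an increasing backward filtration $\{0\}=\hat U_0(x)\subsetneq\hat U_1(x)\subsetneq\dots\subsetneq\hat U_k(x)=\R^d$, equivariant, with the property that $\tfrac1n\log\norm{A(n,x)v}\to\lambda_i$ as $n\to-\infty$ precisely for $v\in\hat U_i(x)\setminus\hat U_{i-1}(x)$. The crucial remaining step is the transversality relation $U_i(x)\oplus\hat U_{i-1}(x)=\R^d$ for $\mu$-a.e.\ $x$ and every $i$; granting it, the dimension count $\dim U_i(x)+\dim\hat U_{i-1}(x)=d$ forces $\dim\bigl(U_i(x)\cap\hat U_i(x)\bigr)=m_i$ and that the subspaces $V_i(x):=U_i(x)\cap\hat U_i(x)$ form a direct sum equal to $\R^d$. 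Each $V_i$ is equivariant as an intersection of equivariant families, and any nonzero $v\in V_i(x)$ lies in $U_i(x)\setminus U_{i+1}(x)$ and in $\hat U_i(x)\setminus\hat U_{i-1}(x)$, so $\tfrac1n\log\norm{A(n,x)v}\to\lambda_i$ as $n\to\pm\infty$, which is conclusions (1) and (2). I expect the Grassmannian-Cauchy estimate for the singular subspaces in the second paragraph, and then the transversality relation above, to be the main obstacles: both demand quantitative, $n$-uniform control of the angles between subspaces, and this is exactly where the two-sided log-integrability hypothesis and ergodicity are genuinely used.
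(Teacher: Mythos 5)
The paper does not prove Theorem~\ref{thm:met}: it states the MET as a classical result, citing Oseledets and Barreira--Pesin, and the only hint it gives of a proof strategy is the remark in the introduction that the original argument constructs a related \emph{triangular} cocycle over an extended base space and reduces to that case. Your sketch follows the other standard route (Raghunathan's singular-value approach): exterior powers and Kingman's theorem produce the Lyapunov spectrum $\lambda_1>\dots>\lambda_k$ with multiplicities $m_i$; a.e.\ convergence of $\bigl(A(n,x)^{T}A(n,x)\bigr)^{1/(2n)}$ to a positive-definite $\Delta(x)$ gives the forward filtration $U_1(x)\supsetneq\dots\supsetneq U_k(x)$; the time-reversed cocycle gives the backward filtration $\hat U_i(x)$; and the Oseledets summands are the intersections $V_i(x)=U_i(x)\cap\hat U_i(x)$. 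This is a genuinely different, and entirely legitimate, proof of the same statement. Your outline is sound: given the transversality relations $U_j(x)\oplus\hat U_{j-1}(x)=\R^d$ for all $j$, the dimension count yields $\dim V_i(x)=m_i$, the sum $\bigoplus_i V_i(x)=\R^d$ is direct by induction, equivariance of each $V_i$ is immediate from equivariance of the two filtrations, and a nonzero $v\in V_i(x)$ avoids $U_{i+1}(x)$ and $\hat U_{i-1}(x)$ by transversality at indices $i{+}1$ and $i$ respectively, so it realizes the rate $\lambda_i$ as $n\to\pm\infty$. You have also honestly flagged where the real technical work lives --- the Grassmannian-Cauchy estimate for the singular subspaces (which uses the spectral gap and the cocycle identity to compare scales) and the transversality relations themselves --- and those are indeed the steps that consume most of the effort in a Raghunathan-style proof. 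Relative to the triangularization approach the paper alludes to, yours works on the original base and produces the filtrations and singular directions directly, at the cost of quantitative angle control between subspaces; the triangularization approach avoids those perturbation estimates by reducing to a triangular cocycle, at the cost of building and analyzing an extension of the base.
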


That is, the MET states that the bundle $X\times\R^d$ may be decomposed as a sum
of invariant bundles with different growth rates.

\begin{defn}[Block Diagonalization]
\label{defn:blockdiag}
Let $(X,\mathcal{B},\mu,T)$ be an ergodic dynamical system. We say that a 
measurable cocycle of $d$-by-$d$ matrices $A$ over $T$ can be put into 
\emph{block diagonal form with block sizes} $(m_1,\dots,m_k)$ over the field
$\F$ (or that $A$ is \emph{block diagonalizable}, if the block sizes are understood), 
if there exist positive integers $m_1,\dots,m_k$ with $k\geq 1$ and 
$m_1+\dots+m_k = d$, a $T$-invariant set of full measure $\tilde{X} \subset X$, 
and a measurable family of matrices $C: X\to GL_d(\F)$ such that 
$C(T(x))^{-1}A(1,x)C(x)$ is block diagonal, with block sizes $(m_1,\dots,m_k)$, 
for all $x\in \tilde{X}$. We also say that $A$ is \emph{cohomologous} to the 
resulting block diagonal matrix cocycle. 
\end{defn}

The previous definition may be extended to non-ergodic systems; in this case, 
the quantities $k$ and $m_1,\dots,m_k$ can be functions on $X$, but are now 
required to be constant on each ergodic component. We will only consider ergodic systems. 

The cocycle $A$ is block diagonalizable with block sizes $(m_1,\ldots,m_k)$
if and only if the bundle $X\times \mathbb F^d$ may be expressed as a sum of
$k$ vector bundles of dimensions $m_1,\ldots,m_k$. To see the equivalence,
notice that if $C(T(x))^{-1}A(1,x)C(x)$ is block diagonal, then 
the fibre, $V_i(x)$ of the $i$th bundle is the span of the $(m_1+\ldots+m_{i-1}+1)$st through
$(m_1+\ldots+m_i)$th columns of $C(x)$. Conversely, if $X\times\mathbb F^d$
is expressed as the sum of bundles $V_1$, $V_2$,\ldots, $V_k$ with fibres
of dimensions $m_1,\ldots,m_k$, then as in \cite{flq1}, we may measurably 
pick bases for each $V_i(x)$. Forming the matrix $C(x)$ as described above
yields the block diagonalization of the cocycle.

\begin{defn}[Block Triangularization]
\label{defn:blocktri}
We say that a measurable cocycle $A$ can be put into \emph{block triangular} 
form over the field $\F$, or is \emph{block triangularizable} over $\F$, 
if $A$ is block diagonalizable over $\F$ in the sense of Definition 
\ref{defn:blockdiag}, and each of the blocks in the decomposition is triangular. 
\end{defn}

\begin{rem}
\label{rem:upper-tri}
We generally assume that any triangularization is upper-triangularization; 
it is equivalent to consider lower triangularization, because if we have 
triangularizing matrices $C(x)$, we may instead use matrices $D(x) = C(x)F$, 
where $F$ is the matrix with ones from bottom-left to top-right and zeroes 
everywhere else. This obtains the opposite triangular form.
\end{rem}

\begin{rem}
\label{rem:2-by-2-tri}
In the case of a cocycle of $2\times 2$ matrices,
by the argument outlined below Definition \ref{defn:blockdiag}, we
see that reducibility of the bundle $X\times\mathbb F^2$
is equivalent to the triangularizability over $\mathbb F$ of the matrix cocycle.
\end{rem}

We may now state precisely the question that we wish to address: 
Given an invertible matrix cocycle, $A$, are the trivial bundles 
$X\times\R^2$ and $X\times\C^2$ irreducible under the action of $A$? 
In the matrix language, can $A$ necessarily be block triangularized?

The remainder of the paper proceeds as follows. In Section \ref{sec:suffcond}, 
we present the main theorem, Theorem \ref{thm:erg-tri}, which gives 
sufficient conditions for the trivial bundle for a $2$-by-$2$ orthogonal 
matrix cocycle to be irreducible over $\R$, 
to be irreducible over $\C$, and not to be cohomologous to a scalar multiple 
of the identity matrix. In Section \ref{sec:ceg}, we present three 
explicit examples illustrating the application of Theorem \ref{thm:erg-tri}, 
in both non-trivial situations. These examples provide a strong negative 
answer to the question posed above; namely, \emph{it is not always 
possible to block triangularize a matrix cocycle, even over the complex numbers}. 
In the language of bundles, \emph{there exists cocycles for which} $X\times\R^2$ 
\emph{and} $X\times\C^2$ \emph{are both irreducible.} 
In Section \ref{sec:erg-tri-proof}, we give the proof of Theorem 
\ref{thm:erg-tri}. Proofs of some of the more technical details are left to the Appendix.

\section{Sufficient conditions for irreducibility}
\label{sec:suffcond}

Denote the collection of real $2$-by-$2$ orthogonal matrices by $O_2(\R)$. 
Let $(X,\cB,\mu,T)$ be an invertible and ergodic measure-preserving system 
on a probability space, and let $A : \Z\times X\to O_2(\R)$ be a measurable 
matrix cocycle over $T$. For each $x\in X$, $A(1,x)$, as a map on $\R^2$, 
either rotates by some angle $\alpha_x$, or reflects in the line with 
angle $\beta_x$; let $X_r \subset X$ be where $A(1,x)$ is a rotation, 
and let $X_f \subset X$ be where $A(1,x)$ is a reflection.

We choose to restrict our study here to orthogonal matrices, because 
in the dimension $2$ case, the MET guarantees that if a cocycle has 
two different Lyapunov exponents, the cocycle is diagonalizable. 
Orthogonal matrices do not change the norm of any vectors, and so 
the only Lyapunov exponent for cocycles of orthogonal matrices is $0$. 
Therefore, the MET yields the trivial decomposition for the cocycle.

Denote $\T = \R/\Z$, and $\Z_2 = \Z/2\Z$. For each $x\in X$, define 
the maps $f_x: \T\to\T$ and $g_x: \Z_2\to\Z_2$ by: 
\[
f_x(y) = \begin{cases} y + \dfrac{\alpha_x}{\pi} & x\in X_r, \\
\dfrac{2\beta_x}{\pi} - y & x\in X_f, \end{cases}\ \qquad 
g_x(a) = \begin{cases} a & x\in X_r, \\ a+1 & x\in X_f. \end{cases}
\] 
From these maps, define skew products $R : X\times\Z_2\to X\times\Z_2$ 
given by $R(x,a) = (T(x),g_x(a))$, and $S : X\times\T \to X\times\T$ 
given by $S(x,y) = (T(x),f_x(y))$. $R$ and $S$ are measure-preserving 
transformations on $X\times\Z_2$ and $X\times\T$, when each space is 
equipped with Haar measure, $\mu\times c$ and $\mu\times\lambda$, 
respectively ($c$ is normalized counting measure on $\Z_2$ and 
$\lambda$ is Lebesgue measure).

\begin{thm}
\label{thm:erg-tri}
Let $(X,\cB,\mu,T)$ be an invertible ergodic measure-preserving system 
over a probability space. Let $A : \Z\times X \to O_2(\R)$ be a measurable 
cocycle over the map $T$. Let $R$ and $S$ be as constructed above.
\begin{enumerate}
\item If $S$ is ergodic, $X\times\R^2$ is irreducible under the action of 
$A$.
\item If both $R$ and $S$ are ergodic, then $X\times\C^2$ is irreducible under the
action of $A$.
\item If at least one of $R$ or $S$ is ergodic, then the cocycle $A$ is not 
cohomologous to a cocycle of the form $\lambda(x)I$, {i.e.} a scalar multiple of the identity.
\end{enumerate}  
\end{thm}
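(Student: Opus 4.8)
The plan is to work out what reducibility, triangularizability, and cohomology-to-scalars mean in terms of the skew products $R$ and $S$, and then read off each statement. The key observation is that an orthogonal $2\times 2$ matrix acts on directions (elements of $\mathbb{RP}^1$, which we parametrize by $\T$ via angle/$\pi$) exactly by the map $f_x$, and on orientation (an element of $\Z_2$) exactly by $g_x$. So first I would show: a line bundle $W$ over $X$, given by an angle function $\theta:X\to\T$, is equivariant under $A$ if and only if $\theta$ is an invariant section of the skew product $S$, i.e. $f_x(\theta(x))=\theta(T(x))$ a.e. Since reducibility of $X\times\R^2$ is exactly the existence of such a $W$ (Remark \ref{rem:2-by-2-tri} plus the fact that a proper nonzero subbundle of a rank-2 bundle is a line bundle), statement (1) follows: if $S$ is ergodic, it has no nonconstant... wait—actually an invariant section need not be constant; the point is that an invariant section of an ergodic skew product over a probability base is impossible unless the fiber action is somehow trivial. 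More precisely, if $\theta$ were an invariant section, the graph $\{(x,\theta(x))\}$ would be an $S$-invariant set; by ergodicity it has measure $0$ or $1$, and it has measure $0$ (it's a graph over $X$ in $X\times\T$, and $\lambda$ is nonatomic), contradiction. That kills (1).

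For (2), over $\C$ the matrix $A(1,x)\in O_2(\R)\subset GL_2(\C)$ can have complex eigendirections. A rotation by $\alpha$ has eigenvectors $(1,\mp i)$ with eigenvalues $e^{\pm i\alpha}$; a reflection has real eigendirections. The complex projective line $\mathbb{CP}^1$ is a sphere, but the relevant fact is that the two "circular" directions $(1,i)$ and $(1,-i)$ are each fixed by every rotation and are swapped by every reflection, while the "real" directions form a circle $\T$ on which rotations act by translation and reflections by a flip. So a complex line subbundle is either (a) a.e. equal to one of the two circular directions — but then equivariance forces $X_f$ to be null, and if $X_f$ is null then $R$ is not ergodic (it's the identity on the $\Z_2$ fiber), handled by the hypothesis that $R$ is ergodic — or (b) it meets the real directions on a positive-measure set, and on that set we again get an invariant section of $S$, contradicting ergodicity of $S$; the bookkeeping to rule out the measurable mixing of cases (a) and (b) is the one genuinely fiddly point. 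I would set it up by letting $E\subset X$ be the set where the complex line is a real direction, show $E$ is $T$-invariant hence null or co-null by ergodicity of $T$, and dispatch each case. Statement (3) is the cleanest: $A$ cohomologous to $\lambda(x)I$ means $A(1,x)=\lambda(T x)/\lambda(x)\cdot$... no: it means $C(Tx)^{-1}A(1,x)C(x)=\lambda(x)I$, i.e. $A(1,x)=\lambda(x)\,C(Tx)C(x)^{-1}$; but a scalar times orthogonal forces $|\lambda(x)|$ related to $\det$, and the upshot is that $A(1,x)$ would have to act trivially on $\mathbb{CP}^1$ for a.e. $x$, which forces all $A(1,x)$ to be rotations (so $X_f$ null, $R$ not ergodic) and moreover the rotation angle to be a coboundary (so $S$ not ergodic — $S$ would be conjugate to the trivial skew product $y\mapsto y$). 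So if either $R$ or $S$ is ergodic, no such $C$ exists.

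The order I would carry this out: (i) the dictionary lemma identifying equivariant line bundles over $\R$ with $S$-invariant sections, proved via the "graph is an invariant null set" argument; (ii) deduce (1) immediately; (iii) the analogous but more elaborate dictionary over $\C$, carefully tracking the circular vs. real directions and using both $R$ (to control orientation/the two circular directions) and $S$ (to control real directions), deduce (2); (iv) a short computation showing cohomology to $\lambda(x)I$ forces both "$X_f$ null" and "angle cocycle is a coboundary", deduce (3). The main obstacle is step (iii): making rigorous the claim that a measurable complex line subbundle can be partitioned into the "circular part" and "real part" in a $T$-equivariant way, so that ergodicity of $T$ lets me treat the two cases separately rather than allowing a genuinely mixed bundle — this requires checking that the set where the line is circular is measurable and $T$-invariant, and that on its complement the line really does define a measurable $\T$-valued section. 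Once that decomposition is in place, each piece contradicts ergodicity of $S$ or of $R$ respectively, and everything else is routine.
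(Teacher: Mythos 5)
Your dictionary between equivariant line bundles and invariant sections of the skew products is the right starting point and matches the paper's strategy, but the mechanism by which you derive a contradiction is flawed in the $S$-cases. You write that the graph $\{(x,\theta(x))\}$ is $S$-invariant, has measure zero, and that this contradicts ergodicity. It does not: ergodicity forbids invariant sets of \emph{intermediate} measure, but a measure-zero invariant set is perfectly consistent with ergodicity. What you need, and what the paper supplies, is the extra structural fact that each fiber map $f_x$ is an isometry of $\T$; consequently $h(x,y)=\lvert y-\theta(x)\rvert_{\T}$ (the torus distance) is an $S$-invariant, measurable, manifestly non-constant function, which \emph{does} contradict ergodicity (equivalently, the thickened graph $\{\lvert y-\theta(x)\rvert_{\T}<1/4\}$ is invariant with measure $1/2$). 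This gap affects both part (1) and the on-circle half of part (2). Your analogous argument for the $\Z_2$-fiber in part (2) is fine as a matter of measure, since there the graph of a section has measure $1/2$, not zero.

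Two further issues. In (2), the dichotomy ``circular directions (the poles $0,\infty$ of $\bar{\C}\cong\mathrm{Gr}_1(\C^2)$) versus real directions (the unit circle)'' is not exhaustive: a complex line bundle may sit at $w(x)$ with $0<\lvert w(x)\rvert<1$, neither a pole nor on the circle. The paper handles this with a preliminary lemma using ergodicity of $T$ alone: $k(x)=\min(\lvert w(x)\rvert,1/\lvert w(x)\rvert)$ is $T$-invariant (rotations fix $\lvert z\rvert$, reflections invert it), hence a.e.\ constant, so the graph lies on a fixed pair of circles $P_C=\{\lvert z\rvert=C\}\cup\{\lvert z\rvert=1/C\}$; the dichotomy then becomes $C=1$ (distance argument against ergodicity of $S$) versus $C<1$ (project by $\iota$ to get a section of $R$ whose graph has measure $1/2$). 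In (3), your claim that cohomology to $\lambda(x)I$ makes each $A(1,x)$ act trivially on $\mathbb{CP}^1$ conflates triviality in the moving $C$-basis with pointwise triviality of $A(1,x)$; the correct consequence is a whole continuum of equivariant line bundles $\subspan\{a\,c_1(x)+b\,c_2(x)\}$, from which the paper extracts (via a continuity and full-measure argument on $H(a,b)=\int h(a,b,x)\,d\mu$) one lying off both the poles and the circle, showing that \emph{both} $R$ and $S$ fail to be ergodic.
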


\begin{rem}
\label{rem:tri}
Notice that in the case of $O_2(\R)$-valued cocycles, if $V=\{(x,V(x))\}$ is a real
invariant sub-bundle, then so is $V^\perp=\{(x,V^\perp(x))\}$, where $V^\perp(x)$ is 
just the orthogonal complement of $V(x)$; the same holds for the Hermitian complement, in the
case that $V$ is a complex vector bundle. Hence, for $O_2$-valued cocycles, if $X\times\F^2$ 
is a reducible bundle, then it may be decomposed as a sum of line bundles. 

In matrix language, if an $O_2(\R)$ cocycle can be block triangularized, then it can be
block diagonalized.
\end{rem}

\begin{rem}
\label{rem:cases}
The third part of Theorem \ref{thm:erg-tri}
is the cocycle analogue of the linear algebra fact that 
given two eigenvectors for the same eigenvalue of a $2$-by-$2$ matrix, 
either they are scalar multiples of each other, or the matrix is similar 
to a scalar multiple of the identity. If the cocycle is cohomologous
to a scalar cocycle, then there is a continuum of proper sub-bundles of $X\times\mathbb F^2$. 
This is ruled out by the ergodicity of either $R$ or $S$.
\end{rem}

\begin{rem}
\label{rem:necessary}
Theorem \ref{thm:erg-tri} gives \emph{sufficient} conditions for $X\times\F^2$
to be an irreducible bundle for the cocycle $A$, and for $A$ to be non-cohomologous 
to a scalar multiple of the identity. None of these conditions are necessary; 
counter-examples are given in the Appendix.
\end{rem}

In the next section, we use Theorem \ref{thm:erg-tri} to show the existence of
matrix cocycles where the trivial bundle is irreducible. The proof of Theorem 
\ref{thm:erg-tri} will be presented in Section \ref{sec:erg-tri-proof}.

\section{Examples of irreducibility}
\label{sec:ceg}

In this section, we discuss three examples; one is a cocycle where the trivial bundle
$X\times \C^2$ is reducible, but $X\times\R^2$ is irreducible, and the other two 
are cases where
$X\times\C^2$ is irreducible (and hence so is $X\times\R^2$).

\subsection{Example 1: rotation cocycle over a rotation}
\label{subsect:rot-cocycle-over-rot}

Let $(\T,\cB,\lambda,T)$ be the irrational rotation by $\eta$ over the 
unit interval with normalized Lebesgue measure, and consider the matrix 
cocycle $A$ generated by 
\[
A(1,x) = \begin{bmatrix} \cos(\pi{x}) & -\sin(\pi{x}) \\
\sin(\pi{x}) & \cos(\pi{x}) \end{bmatrix} = \mathrm{rot}_{\pi{x}}.
\] 

\begin{prop}
\label{prop:rot-cocycle-over-rot}
The bundle $X\times\C^2$ is reducible under the action of $A$, but
$X\times \R^2$ is irreducible under the action of $A$.
\end{prop}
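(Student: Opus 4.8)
The plan is to apply Theorem~\ref{thm:erg-tri} directly, which reduces everything to checking ergodicity of the two skew products $R$ and $S$ attached to this particular cocycle. Here the underlying map is the irrational rotation $T(x)=x+\eta$ on $\T$, and $A(1,x)=\mathrm{rot}_{\pi x}$ is always a rotation (never a reflection), so $X_r=X$ and $X_f=\emptyset$. Consequently the rotation angle is $\alpha_x=\pi x$, so $f_x(y)=y+\alpha_x/\pi=y+x$, while $g_x$ is always the identity. Thus $R$ is (a copy of $T$ times) the identity on $\Z_2$, hence \emph{not} ergodic; by part (2) of Theorem~\ref{thm:erg-tri} we get no information about $\C^2$ from that route, which is consistent with the claimed reducibility. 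On the other hand $S(x,y)=(x+\eta,\,y+x)$ is a skew product over the irrational rotation with cocycle $n\mapsto\sum_{k=0}^{n-1}(x+k\eta)$, and part (1) of the theorem says: if $S$ is ergodic then $X\times\R^2$ is irreducible.

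**The two halves of the proof.** So the proof splits into: (i) show $S$ is ergodic on $\T^2$, giving irreducibility of $X\times\R^2$; and (ii) exhibit an explicit invariant complex line bundle to show $X\times\C^2$ is reducible. For (i), $S$ is an affine skew product $(x,y)\mapsto(x+\eta, y+x)$ on $\T^2$; this is a classical Anzai-type skew product. The cleanest argument is via Fourier analysis: suppose $\varphi\in L^2(\T^2)$ is $S$-invariant, expand $\varphi(x,y)=\sum_m \hat\varphi_m(x)e^{2\pi i m y}$, and use invariance to get $\hat\varphi_m(x+\eta)=e^{-2\pi i m x}\hat\varphi_m(x)$ for each $m$. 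Comparing Fourier coefficients in $x$ forces $\hat\varphi_m\equiv 0$ for $m\neq 0$ (because $e^{-2\pi i m x}$ has a nonzero Fourier mode and $\eta$ is irrational so $e^{2\pi i k\eta}\neq 1$ for $k\neq 0$), and for $m=0$ invariance of $\hat\varphi_0$ under the irrational rotation forces it constant. Hence $\varphi$ is constant and $S$ is ergodic.

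**Reducibility over $\C$.** For (ii), the point is that a rotation matrix $\mathrm{rot}_\theta$ is diagonalized over $\C$ by the \emph{fixed} eigenbasis $e_\pm=(1,\mp i)^{\!\top}$, with eigenvalues $e^{\pm i\theta}$, \emph{independently of $\theta$}. Therefore the constant line bundle $W(x)=\C\cdot(1,-i)^{\!\top}$ (or its complement $\C\cdot(1,i)^{\!\top}$) satisfies $A(1,x)W(x)=W(T(x))$ for every $x$, so it is a genuine measurable invariant sub-bundle with $\{0\}\subsetneq W(x)\subsetneq\C^2$; this witnesses reducibility of $X\times\C^2$. Equivalently, in matrix language, conjugating the whole cocycle by the constant matrix $C=\begin{bmatrix}1&1\\-i&i\end{bmatrix}$ turns it into the diagonal cocycle $\mathrm{diag}(e^{i\pi x},e^{-i\pi x})$.

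**Main obstacle.** The only real content is the ergodicity of $S$ in step (i); everything else is a direct appeal to Theorem~\ref{thm:erg-tri} or an explicit constant eigenvector computation. The Fourier argument for $S$ is standard but must be done carefully: the key mechanism is that the "twist" in $S$ produces the multiplier $e^{-2\pi i m x}$, whose nontrivial $x$-dependence (for $m\neq 0$) is incompatible with the relation $\hat\varphi_m(x+\eta)=e^{-2\pi i m x}\hat\varphi_m(x)$ when $\eta$ is irrational. I expect this coefficient-matching step — ruling out all $m\neq 0$ — to be the part that needs the most care, though it is routine.
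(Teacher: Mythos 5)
Your proposal is correct and follows essentially the same route as the paper: compute that $S(x,y)=(x+\eta,y+x)$ is the Anzai skew product, invoke its ergodicity to get irreducibility of $X\times\R^2$ via Theorem~\ref{thm:erg-tri}, and exhibit the constant eigenvectors $(1,\pm i)^{\top}$ of $\mathrm{rot}_{\pi x}$ to give the explicit splitting of $X\times\C^2$. The only cosmetic difference is that the paper cites Furstenberg for ergodicity of $S$ while you sketch the standard Fourier argument (your parenthetical reason for $m\neq 0$ slightly conflates the role of irrationality with the $\ell^2$ shift argument, but the conclusion and the overall structure are right).
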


\begin{proof}
To see that $X\times\R^2$ is irreducible, we first compute the map $S$. 
Each matrix $A(1,x)$ is a rotation in $\R^2$ by $\alpha_x = \pi{x}$, 
which allows us to compute the map $f_x$ as outlined above. We 
obtain $f_x(y) = y+x$, and then we compute the map $S$ on $\T^2$ to be 
$S(x,y) = (x+\eta,y+x)$. It is well known (see, for example, 
\cite{furstenberg}, Theorem 2.1) that $S$ is an ergodic map with 
respect to Lebesgue measure on $\T^2$. Applying Theorem 
\ref{thm:erg-tri} yields that $X\times\R^2$ is irreducible.

On the other hand, observe that $A(1,x)\begin{bmatrix}1\\i\end{bmatrix}=e^{-\pi i x}
\begin{bmatrix}1\\i\end{bmatrix}$ and $A(1,x)\begin{bmatrix}1\\-i\end{bmatrix}=e^{\pi i x}
\begin{bmatrix}1\\-i\end{bmatrix}$
Hence $X\times\C^2$ is reducible, as 
\[\left(X\times\subspan{\begin{bmatrix}1\\ i\end{bmatrix}}\right)\oplus \left(X\times \subspan{\begin{bmatrix}1\\-i\end{bmatrix}}\right). \]
\end{proof}

\subsection{Example 2: rotation and flip cocycle over a rotation}
\label{subsect:flip-cocycle-over-rot}

Let the base dynamics space be the same as in the previous example: 
$(\T,\cB,\lambda,T)$. This time, for an irrational number 
$\alpha$ in $(0,1)$, we define a matrix cocycle $A$ over 
$T$, with 
\[
A(1,x) = \begin{cases} \ \begin{bmatrix}
\cos(\pi\alpha) & -\sin(\pi\alpha) \\
\sin(\pi\alpha) & \cos(\pi\alpha)
\end{bmatrix} & x \in [0,1-\eta), \\[20pt]
\ \begin{bmatrix}
1 & 0 \\
0 & -1
\end{bmatrix} & x \in [1-\eta,1).
\end{cases}
\] 

\begin{prop}
\label{prop:flip-cocycle-over-rot}
The trivial bundle $X\times\C^2$ is irreducible for the cocycle $A$ over 
$(\T,\cB,\lambda,T)$ as defined above.
\end{prop}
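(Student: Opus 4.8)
The plan is to apply Theorem~\ref{thm:erg-tri}, part (2), so the task reduces to showing that both skew products $R$ on $\T\times\Z_2$ and $S$ on $\T\times\T$ are ergodic for this cocycle. First I would identify the data: the rotation-or-reflection dichotomy splits $\T$ into $X_r=[0,1-\eta)$ and $X_f=[1-\eta,1)$; on $X_r$ the matrix is $\mathrm{rot}_{\pi\alpha}$, so $\alpha_x=\pi\alpha$ is constant, and on $X_f$ the matrix $\mathrm{diag}(1,-1)$ is reflection in the horizontal axis, so $\beta_x=0$. Plugging into the definitions, $f_x(y)=y+\alpha$ on $X_r$ and $f_x(y)=-y$ on $X_f$, while $g_x(a)=a$ on $X_r$ and $g_x(a)=a+1$ on $X_f$.

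For the $\Z_2$-extension $R(x,a)=(x+\eta,\,a+\mathbf 1_{X_f}(x))$, ergodicity is the classical criterion for skew products by a cocycle into a compact abelian group: $R$ fails to be ergodic iff the $\Z_2$-cocycle $\mathbf 1_{X_f}$ is a coboundary (in the appropriate sense, an equation $\phi(T x)-\phi(x)=\mathbf 1_{X_f}(x)$ solvable with $\phi:\T\to\Z_2$), equivalently iff the nontrivial character $a\mapsto(-1)^a$ composed with the cocycle is a multiplicative coboundary: $(-1)^{\mathbf 1_{X_f}(x)}=\psi(Tx)/\psi(x)$ for some measurable $\psi:\T\to\{\pm1\}$. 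One shows this is impossible: such $\psi$ would have to be a measurable eigenfunction-type object, and writing $\psi=\sum c_n e^{2\pi i n x}$ and comparing Fourier coefficients of $\psi(x+\eta)=(-1)^{\mathbf 1_{X_f}(x)}\psi(x)$ forces a contradiction because $(-1)^{\mathbf 1_{X_f}}=1-2\mathbf 1_{[1-\eta,1)}$ is not of the form $\overline{h(x+\eta)}h(x)$ for a unimodular $h$ with the right spectral structure — more concretely, the step function has a jump, so no such $\psi$ can exist. This standard argument (essentially that the indicator of an interval of irrational length is not a $\Z_2$-coboundary over an irrational rotation) gives ergodicity of $R$.

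Ergodicity of $S(x,y)=(x+\eta, f_x(y))$ is the harder part, because $S$ is not a group extension: the reflection $y\mapsto -y$ on the set $X_f$ makes the fiber maps non-translations. The strategy I would use is to pass to the square or to analyze invariant functions directly via Fourier analysis in the $y$-variable. Write an $S$-invariant $F\in L^2(\T\times\T)$ as $F(x,y)=\sum_{n\in\Z} a_n(x)e^{2\pi i n y}$. Invariance $F(Sx,y\text{-data})=F(x,y)$ translates into a coupled system: on $X_r$, $a_n(x+\eta)e^{2\pi i n\alpha}=a_n(x)$ pointwise a.e.\ on $X_r$; on $X_f$, the reflection sends $e^{2\pi i n y}\mapsto e^{-2\pi i n y}$, so it couples $a_n$ with $a_{-n}$. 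The cleanest route is: consider $|a_n(x)|^2+|a_{-n}(x)|^2$ or the $T$-invariance forced on $|a_n(x)|$; since $T$ is ergodic, each $|a_n(x)|$ is constant. Then for $n\neq 0$ one derives that the unimodular function $a_n(x)/|a_n|$ must satisfy, on the full circle, a cocycle equation whose solvability again reduces to the non-coboundary property of the interval indicator (the angle $\pi\alpha$ and the cut at $1-\eta$ enter here), yielding $a_n\equiv 0$ for $n\neq0$; and then $a_0(x)$ is $T$-invariant hence constant, so $F$ is constant.

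The main obstacle I anticipate is precisely the rigorous handling of the reflection coupling in the proof that $S$ is ergodic: one must be careful that the relation between $a_n$ and $a_{-n}$ holds only on the reflection set $X_f$ while the phase relation holds only on the rotation set $X_r$, and patching these into a single global obstruction requires invoking the structure of $T$ as an irrational rotation and a Fourier/coboundary argument on $\T$. A convenient alternative that sidesteps some bookkeeping is to use the abstract criterion that a skew product over an ergodic base with fibers in a compact group $G$ acting on a homogeneous space is ergodic iff the associated $G$-valued (here $O_2(\R)$-valued, or rather $\mathrm{Isom}(\T)$-valued) cocycle is not cohomologous to one taking values in a proper closed subgroup — but since we have explicit concrete maps, the direct Fourier computation is likely the shortest self-contained path, with the technical details deferred to the Appendix as the paper indicates.
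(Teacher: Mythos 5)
Your reduction to Theorem~\ref{thm:erg-tri}(2) and your computation of the maps $R$ and $S$ exactly match the paper's. The divergence, and the gaps, are in the proofs that $R$ and $S$ are ergodic.

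For $R$: your framing via the $\Z_2$-coboundary criterion is correct in principle, but the reason you give for why $(-1)^{\mathbf 1_{X_f}}$ is not a multiplicative coboundary — ``the step function has a jump, so no such $\psi$ can exist'' — is not a valid argument. Whether the indicator of an interval is a $\Z_2$-coboundary over an irrational rotation is a delicate question depending on the arithmetic relation between the interval length and the rotation number; it is emphatically \emph{not} a standard fact that it always fails. Here the key structural fact is that $X_f=[1-\eta,1)$ has length equal to the rotation angle $\eta$, so the first-return map of $T$ to $X_f$ is itself (conjugate to) an irrational rotation, and the $\Z_2$-coordinate increments by exactly $1$ per return. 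The paper exploits precisely this: inducing on $X_f\times\Z_2$ yields a genuine group rotation $(x,a)\mapsto(x+\beta,a+1)$ on $\T\times\Z_2$ with $\beta$ irrational, which is ergodic by Walters' density criterion. Your Fourier-coefficient sketch for the coboundary equation $\psi(x+\eta)=(-1)^{\mathbf 1_{X_f}(x)}\psi(x)$ does not go through, because multiplying by the non-constant factor $(-1)^{\mathbf 1_{X_f}}$ convolves the Fourier coefficients rather than simply scaling them.

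For $S$: the Fourier-in-$y$ decomposition leads exactly to the coupled system you describe, namely $a_n(x+\eta)e^{2\pi i n\alpha}=a_n(x)$ on $X_r$ and $a_{-n}(x+\eta)=a_n(x)$ on $X_f$. But notice that the pair $(a_n,a_{-n})$ then transforms by a $2\times 2$ unitary cocycle which is a rotation on $X_r$ and a swap on $X_f$ --- the same ``rotate-or-flip'' structure as the cocycle $A$ you started with. Pushing further (e.g.\ examining $a_n\overline{a_{-n}}$) reduces to deciding ergodicity of $S$ with $\alpha$ replaced by $2n\alpha$, which is a problem of the same type, so the argument threatens to be circular rather than terminating. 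This is precisely why the paper does not attempt a direct Fourier argument here: it instead induces on $X_f\times\T$, \emph{squares} the induced map to kill the reflection (reducing to a genuine $\T$-translation cocycle), induces once more, and then identifies the result as the skew product in Schmidt's ergodicity result for a $\Z$-valued cocycle over an irrational rotation, transferring from the $\sigma$-finite extension on $\T\times\alpha\Z$ to the compact extension on $\T^2$ via Proposition~\ref{prop:sig-finite}. That inducing-and-squaring step is the missing idea in your outline: it is what converts the non-group (reflection-containing) skew product into a group extension amenable to the coboundary/Fourier machinery. Without it, ``the direct Fourier computation'' does not close.

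In short: the setup and the reduction are right, but both ergodicity proofs as sketched have genuine gaps, and the argument the paper actually uses (inducing, squaring, Schmidt's result plus a $\sigma$-finite-to-finite transfer) is structurally different from and substantially stronger than what you propose.
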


We leave the proof to the Appendix. The proof uses a result by Schmidt 
stemming from his work on ergodic transformation groups \cite{ks_cocycle}. 
We note that we need to have $A$ take values outside of the rotations $SO_2(\R)$, 
since the same proof as in the previous example shows that $X\times\C^2$ is 
reducible under the action of \emph{any} measurable cocycle with values in $SO_2(\R)$.

\subsection{Example 3: rotation and flip cocycle over a Bernoulli shift}
\label{subsect:flip-cocycle-over-shift}

In the next example, the base map will be a Bernoulli shift instead of a
circle rotation. We again obtain irreducibility of the bundle, indicating
that reducibility of the bundle is not primarily determined by the properties
of the base system.

Let $(X,\cB,\mu,T)$ denote the left Bernoulli shift on two symbols, each 
with weight $\frac{1}{2}$. Let $A$ be a matrix cocycle over $T$, 
generated by the map $A(1,\cdot) : X\to O_2(\R)$, given by: 
\[
A(1,x) = \begin{cases} \
\begin{bmatrix}
\cos(\pi\alpha) & -\sin(\pi\alpha) \\
\sin(\pi\alpha) & \cos(\pi\alpha)
\end{bmatrix} & x_0 = 0, \\[20pt]
\ \begin{bmatrix}
1 & 0 \\
0 & -1
\end{bmatrix} & x_0 = 1.
\end{cases}
\] 

\begin{prop}
\label{prop:flip-cocycle-over-shift}
The trivial bundle $X\times\C^2$ is irreducible for the cocycle 
$A$ over $(X,\cB,\mu,T)$ as defined above.
\end{prop}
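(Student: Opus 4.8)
The plan is to apply Theorem~\ref{thm:erg-tri}(2): it suffices to show that both skew products $R$ and $S$ associated to this cocycle are ergodic. The structural feature being exploited is that the cocycle takes only two values — the rotation $\mathrm{rot}_{\pi\alpha}$ when $x_0=0$, and the flip $\mathrm{diag}(1,-1)$ when $x_0=1$ — so $X_r=\{x:x_0=0\}$ and $X_f=\{x:x_0=1\}$, each of measure $\tfrac12$, and for $x\in X_f$ the reflection line has angle $\beta_x=0$. Consequently $g_x$ flips the $\Z_2$-coordinate exactly when $x_0=1$, and $f_x(y)=y+\alpha$ on $X_r$ while $f_x(y)=-y$ on $X_f$.

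First I would handle $R$ on $X\times\Z_2$. The skew product is $R(x,a)=(Tx,\,a+x_0\ (\mathrm{mod}\ 2))$, i.e.\ the $\Z_2$-extension of the Bernoulli shift by the cocycle $x\mapsto x_0$. Ergodicity of such a $\Z_2$-extension is equivalent to the cocycle not being a coboundary, i.e.\ there is no measurable $\phi:X\to\Z_2$ with $x_0=\phi(Tx)-\phi(x)$ a.e. The cleanest route is to invoke the fact that the Bernoulli shift is weakly mixing (indeed mixing of all orders), and that a weakly mixing base has no nontrivial $\Z_2$-factors of the relevant type unless the cocycle is cohomologous to a constant; since $\int x_0\,d\mu=\tfrac12\neq 0,1$, the cocycle is not cohomologous to the trivial or the constant-$1$ cocycle, and the extension is ergodic. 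Alternatively, a direct argument works: if $h\in L^2(X\times\Z_2)$ is $R$-invariant, expand $h(x,a)=h_0(x)+h_1(x)(-1)^a$; invariance forces $h_0\circ T=h_0$ (so $h_0$ constant by ergodicity of $T$) and $h_1(Tx)(-1)^{x_0}=h_1(x)$, i.e.\ $|h_1|\circ T=|h_1|$ so $|h_1|$ is a.e.\ constant, and then $h_1(Tx)=(-1)^{x_0}h_1(x)$ on a weakly mixing system forces $h_1\equiv 0$ (the character $(-1)^{x_0}$ has nontrivial dependence on the zero coordinate and cannot be a coboundary into the circle).

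Next I would treat $S$ on $X\times\T$, which is the harder and more interesting part. Here $S(x,y)=(Tx,\,y+\alpha)$ for $x_0=0$ and $S(x,y)=(Tx,\,-y)$ for $x_0=1$. This is a skew product over the Bernoulli shift with fibre maps in the group generated by an irrational rotation and the reflection $y\mapsto -y$ — so the structure group is the infinite dihedral-type group acting on $\T$, and the associated problem is a $\T$-extension (equivalently, an $\R$- or $\Z$-valued cocycle problem after lifting). The natural tool, as the paper signals for Example~2, is Schmidt's criterion for ergodicity of such group extensions in terms of the essential range of the cocycle (equivalently, the cocycle not being cohomologous to one taking values in a proper closed subgroup). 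I would show the essential values of the defining cocycle fill out all of $\T$: because $T$ is a Bernoulli shift one has great freedom in choosing cylinder sets along which to read off returns, and the presence of the flip together with the irrational rotation by $\alpha$ generates a dense subgroup of the relevant group, ruling out any reduction to a proper closed subgroup and hence forcing ergodicity of $S$. In fact I expect the argument to parallel the Appendix proof of Proposition~\ref{prop:flip-cocycle-over-rot} almost verbatim, with the only change being that the Bernoulli shift replaces the circle rotation as the base — and the Bernoulli case is if anything easier, since the strong mixing and independence properties of the shift make the verification of Schmidt's condition more transparent than in the rotation case.

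The main obstacle is the ergodicity of $S$: one must verify Schmidt's essential-range condition carefully, tracking how the reflection $y\mapsto -y$ interacts with the rotation by $\alpha$ under composition along orbits of the shift, and confirming that no nontrivial measurable conjugacy collapses the fibre dynamics into a finite or lower-dimensional quotient. Everything else — the ergodicity of $R$, and the bookkeeping identifying $X_r$, $X_f$, $\alpha_x$, $\beta_x$, $f_x$, $g_x$ from the explicit two-valued generator — is routine. Once both $R$ and $S$ are shown ergodic, Theorem~\ref{thm:erg-tri}(2) immediately gives irreducibility of $X\times\C^2$, and (since $S$ alone ergodic already suffices for part (1)) irreducibility of $X\times\R^2$ as well, consistent with the discussion preceding the proposition. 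Because this argument duplicates the Appendix treatment of Example~2, I would in practice defer the detailed verification to the Appendix and state here only the reduction to ergodicity of $R$ and $S$ together with the observation that the Bernoulli base is handled by the same Schmidt-type argument.
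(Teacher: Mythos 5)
Your reduction to Theorem~\ref{thm:erg-tri}(2), and your argument for ergodicity of $R$, are both fine (the character-expansion/coboundary argument you give for $R$ is cleaner than, but equivalent to, the paper's inducing argument). The issue is your treatment of $S$.

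You say the $S$-argument should ``parallel the Appendix proof of Proposition~\ref{prop:flip-cocycle-over-rot} almost verbatim, with the only change being that the Bernoulli shift replaces the circle rotation.'' That is not how it works. In Example~2 the chain induce~$\to$~square~$\to$~induce works because inducing an irrational rotation on an interval of the right length yields another irrational rotation; the end product $Q$ is again a $\T$-extension of an irrational rotation, and Schmidt's Proposition~\ref{prop:ks-res} is stated precisely for that class of systems. Over a Bernoulli base, inducing on a cylinder produces a Bernoulli shift on a countable alphabet, not a rotation, so the same Schmidt proposition simply does not apply. The paper acknowledges this by taking a different route for Example~3: after the analogous induce/square/induce reduction one lands on a skew product over a countable-alphabet Bernoulli shift (symbols in $\N\times\N$), and ergodicity is then established directly by Fourier analysis exploiting independence, with no appeal to Schmidt's essential-range machinery. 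Your proposal also never makes explicit the induce/square/induce step that removes the flip; without it you do not have an \emph{abelian} $\T$-extension, and Schmidt's criterion as invoked is for abelian group extensions. As written, the $S$-part of your plan is a hope rather than a proof: you would need either to state and justify a Bernoulli-base version of Schmidt's essential-range criterion, or (as the paper does) replace that tool altogether with a direct Fourier-analytic mixing argument on the reduced system.

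One smaller point: your claim that ``the Bernoulli case is if anything easier'' is overstated. It is true that independence gives more freedom, but the reduced system is a skew product over a countable-alphabet shift and the Fourier-analytic verification of mixing is a nontrivial computation; the paper explicitly relegates it to \cite{horan\_thesis}. The advantage of the Bernoulli setting is not that the computation becomes trivial but that it is \emph{self-contained}, i.e.\ does not rely on importing Schmidt's result.
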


Again, we leave the proof to the Appendix. This example has the feature 
that the proof is self-contained, relying only on Fourier analysis.

\section{Proof of Theorem \ref{thm:erg-tri}}
\label{sec:erg-tri-proof}

\begin{proof}
We begin with the proof of irreducibility of the bundle $X\times\C^2$.
The proof will proceed in three steps.

\emph{Step 1:} Let $\mathrm{Gr}_1(\C^2)$ denote the complex 
Grassmannian of $1$-dimensional subspaces of $\C^2$. It is well-known 
that $\mathrm{Gr}_1(\C^2)$ is homeomorphic to $\bar{\C}$, the one-point 
compactification of $\C$. In particular, if we choose 
\[
v_1 = \begin{bmatrix} 1 \\ i \end{bmatrix},\quad 
v_2 = \begin{bmatrix} 1 \\ -i \end{bmatrix},
\] 
then a homeomorphism is given by $z \leftrightarrow \subspan\{v_1+zv_2\}$ 
for $z\in\C$, and $\infty \leftrightarrow \subspan\{v_2\}$; this yields 
coordinates for the Grassmannian.

Consider the skew product of $T$ and the generator of $A$, $A(1,\cdot)$, 
on the space $X\times \mathrm{Gr}_1(\C^2)$ (as an invertible $2$-by-$2$ matrix, 
$A(1,x)$ acts naturally on $\mathrm{Gr}_1(\C^2)$). In the coordinates 
above, we obtain a map $N$ on $X\times\bar{\C}$ as follows: 
for $z\in\C\setminus\{0\}$, 
\[
N(x,z) = \begin{cases} (T(x), e^{2i\alpha_x}z) & x\in X_r, \\
(T(x),{e^{4i\beta_x}}/{z}) & x\in X_f, \end{cases}
\] 
whereas for $z=0$ or $z=\infty$, we have 
\begin{gather*} 
N(x,0) = \begin{cases} (T(x),0) & x\in X_r, \\
 (T(x),\infty) & x\in X_f, \end{cases} \\
N(x,\infty) = \begin{cases} (T(x),\infty) & x\in X_r, \\ (T(x),0) & x\in X_r. \end{cases}
\end{gather*}
The form of $N$ stems from the fact that $v_1$ and $v_2$ are eigenvectors 
for the rotation matrices, and are swapped and scaled by the reflection matrices.

To complete the setup, denote the complex unit circle by $\mathbb{S}$, 
let $\iota : \bar{\C}\setminus \mathbb{S} \to \Z_2$ be given by 
\[
\iota(z) = \begin{cases} 0 & \abs{z} < 1, \\ 1 & \abs{z} > 1, \end{cases}
\]
and let $\tau : \bar{\C}\setminus\{0,\infty\} \to \T$ be given by 
$\tau(z) =\arg(z)/(2\pi)$. These maps are measurable, and will 
be used to relate $N$ with $R$ and $S$.

\vskip \baselineskip

\emph{Step 2:} Assume, for the sake of contradiction, that $V$ is an
invariant one-dimensional sub-bundle  of $X\times\C^2$.
Denote the coordinates of $V(x)$ in $\bar{\C}$ by $w(x)$. 
The following lemma sets the stage for the remainder of the proof.

\begin{lem}
\label{lem:inv-graph}
Let $N$ be the map described in Step 1, and let $w(x)$ be described 
as above. Let $P_C$ be the union of two circles $\{\abs{z} = C\} \cup 
\{\abs{z} = \frac{1}{C}\},$ where if $C=0$ then $\frac{1}{C} = \infty$, 
and $P_1 = \mathbb{S}$. Then the graph of $w$ is invariant under $N$, 
and is contained in $X\times P_C$ for some $C\in [0,1]$. Moreover, 
$X\times P_C$ is an invariant set under $N$, for any $C\in[0,1]$.
\end{lem}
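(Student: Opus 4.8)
The plan is to prove the three assertions in the order stated: first that the graph of $w$ is $N$-invariant, then that it sits inside $X\times P_C$ for a single constant $C\in[0,1]$, and finally that each $X\times P_C$ is itself $N$-invariant. The first and third are direct computations; the second is where the real argument lies.

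For the graph-invariance I would simply unwind the definitions. By construction $N$ is the coordinate representation on $X\times\bar\C$ of the skew product $(x,V)\mapsto(T(x),A(1,x)V)$ on $X\times\mathrm{Gr}_1(\C^2)$, so the hypothesis that $V$ is an invariant bundle, $A(1,x)V(x)=V(T(x))$ on a $T$-invariant conull set, translates there into $N(x,w(x))=(T(x),w(T(x)))$; that is, $\{(x,w(x))\}$ is $N$-invariant. Since $T$ and the matrices $A(1,x)$ are invertible, $N$ is invertible and this invariance holds in both time directions.

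The key step is the containment, and here I would look for a $T$-invariant function manufactured from $|w|$. Reading off the explicit form of $N$: for $x\in X_r$ one has $|w(T(x))|=|e^{2i\alpha_x}\,w(x)|=|w(x)|$, and for $x\in X_f$ one has $|w(T(x))|=|e^{4i\beta_x}/w(x)|=1/|w(x)|$, with the degenerate values $|0|=0$, $1/0=\infty$, $1/\infty=0$ absorbed by the separate cases defining $N$ at $0$ and $\infty$. Consequently the measurable function $\phi(x):=\min\bigl(|w(x)|,\,1/|w(x)|\bigr)$, which takes values in $[0,1]$, satisfies $\phi\circ T=\phi$ almost everywhere: on $X_r$ it is unchanged because $|w|$ is, and on $X_f$ it is unchanged because replacing $|w|$ by $1/|w|$ leaves $\min(|w|,1/|w|)$ fixed. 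Ergodicity of $T$ then forces $\phi$ to equal a constant $C\in[0,1]$ almost everywhere, and $\phi(x)=C$ says precisely that $|w(x)|\in\{C,1/C\}$ (with the convention $1/0=\infty$), i.e.\ $w(x)\in P_C$. Thus the graph of $w$ lies in $X\times P_C$ up to a null set, which may be discarded.

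Finally, the $N$-invariance of $X\times P_C$ for an arbitrary $C\in[0,1]$ comes from the same two observations: for $x\in X_r$ the fibre map is multiplication by the unimodular number $e^{2i\alpha_x}$, which preserves each circle $\{|z|=C\}$ and $\{|z|=1/C\}$; for $x\in X_f$ the fibre map $z\mapsto e^{4i\beta_x}/z$ swaps these two circles. The boundary cases $C=1$ (so $P_1=\mathbb{S}$) and $C=0$ (so $P_0=\{0,\infty\}$) are handled by the same computation together with the listed values $N(x,0),N(x,\infty)$. The only points needing care are routine bookkeeping: checking that $\phi$ is measurable (it is a composition of the measurable $w$ with continuous maps on $\bar\C$), and arranging the pointwise identities to hold on one common $T$-invariant conull set so that ergodicity applies. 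I do not anticipate any genuine obstacle; the whole content of the lemma is the choice of $\min(|w|,1/|w|)$ as the quantity to track.
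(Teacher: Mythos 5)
Your proposal is correct and follows essentially the same route as the paper: the paper also defines $k(x)=\min\{\abs{w(x)},1/\abs{w(x)}\}$ (your $\phi$), observes it is $T$-invariant and hence a.e.\ constant by ergodicity, and checks invariance of $X\times P_C$ by applying $N$ directly. Your version spells out the case analysis on $X_r$ versus $X_f$ a bit more explicitly, but there is no substantive difference.
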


\begin{proof}
The invariance of $V$ and the definition of the map $N$ provide 
almost-everywhere invariance of the graph of $w$ under $N$. 
For the containment of the graph inside $X\times P_C$ for some $C$, 
let $k(x) = \min\{\abs{w(x)},\frac{1}{\abs{w(x)}}\}$, and observe that 
$k$ is an invariant measurable function with respect to the ergodic map $T$, 
hence is almost-everywhere constant with respect to $\mu$, with 
value $C\in [0,1]$. Then the graph of $w$ is contained in $X\times P_C$, 
up to a set of measure zero. The set $X\times P_C$ is easily shown to be 
almost-everywhere invariant by applying $N$ to a point in the set. 
Finally, we may remove an invariant measure-zero subset of $X$ 
on which equivariance of $w$ fails, and separately where containment 
inside $X\times P_C$ fails. This completes the proof.
\end{proof}

\vskip \baselineskip

Step 3: By the above lemma, $X\times \mathbb{S}$ and its complement 
are invariant sets for $N$, and the graph of $w$ lies entirely in one 
of them. We split the remainder of the proof into two cases. First, 
consider the case where the graph of $w$ lies in $X\times \mathbb{S}$. 
Using the definition of the map $\tau$ from Step 1, an explicit 
calculation shows that 
\[(\id\times\tau)\circ \restr{N}{X\times\mathbb{S}} = 
S \circ (\id\times\tau).
\] 
Thus the graph of $\tau \circ w$ (a subset of $X\times \T$) is invariant 
under $S$. Let $h(x,y) = \abs{y-\tau\circ w(x)}_{\T}$ be the distance 
on the torus $\T$. By using the explicit form of $S$ (as before Theorem 
\ref{thm:erg-tri}), $h$ is shown to be an $S$-invariant function, 
and it is clearly measurable. Since $h$ is non-constant, this contradicts 
the ergodicity of $S$.

Then, consider the case that the graph of $w$ lies in the complement of 
$X\times\mathbb{S}$. Using the map $\iota$, we get that 
\[
(\id\times\iota) \circ \restr{N}{X\times\{\C\setminus\mathbb{S}\}} 
= R \circ (\id\times\iota),
\] 
so analogously to the previous case, the graph of $\iota\circ w$ is 
invariant under $R$. We apply Fubini's theorem to show that the graph 
of $\iota\circ w$ has measure $\frac{1}{2}$, so since the graph is 
$R$-invariant, this contradicts the ergodicity of $R$.

The two cases together show that $w$ cannot exist, and thus the one-dimensional 
bundle, $V$, cannot exist as assumed; therefore $X\times\C^2$ is irreducible.

Next, we show that if $S$ is ergodic, then $X\times\R^2$ is irreducible;
this is part (2) of the theorem. Instead of the complex Grassmannian, we consider 
the real Grassmannian, $\mathrm{Gr}_1(\R^2)$. It is easy to show that it is 
homeomorphic to the unit circle $\T$ equipped with the usual topology, so we have 
coordinates for $\mathrm{Gr}_1(\R^2)$. A similar argument to that in Step 1 for the 
complex case computes the skew product of $T$ and $A(1,\cdot)$ over $X\times\T$, 
which is exactly the map $S$. The exact same arguments in the remaining steps 
applied to the map $S$ go through, and hence $X\times\R^2$ is irreducible.

Finally, we prove part (3) of the theorem. Suppose, on the contrary, that $A$ 
is cohomologous to a scalar multiple of the identity, so that there exists a 
measurable matrix function $C(x)$ and a measurable function $\lambda : X \to \C$ 
such that on a set of full measure in $X$,
\[
C(T(x))^{-1}A(1,x)C(x) = \lambda(x)I.
\]
In the previous parts of the proof, we found that if there are equivariant 
subspaces which do not lie on the unit circle of the Grassmannian 
(in $\bar{\C}$ coordinates), then the map $R$ cannot be ergodic, 
and if there are equivariant subspaces which do not lie at the poles 
($0$ or $\infty$ in $\bar{\C}$), then the map $S$ cannot be ergodic. 
We will use the functions $v_1(x)$ and $v_2(x)$ to construct an 
equivariant family of subspaces which lie neither at the poles, 
nor on the unit circle. This will imply that neither $R$ nor $S$ is 
ergodic, which is the contrapositive of statement (3) in the theorem.
Notice that for fixed $a$ and $b$ (not both 0), $\{(x,\subspan\{av_1(x)+bv_2(x)\})\colon
x\in X\}$ is a one-dimensional invariant bundle; if we can find $a$ and $b$ 
such that the subspaces in this bundle live away from the unit circle or the poles, 
then we will be done.

Let $q$ be the homeomorphism from $\mathrm{Gr}_1(\C^2)$ to $\bar{\C}$, 
and let $h : (\C\times\C\setminus\{(0,0)\})\times X \to [0,1]$ be given by 
\[
h(a,b,x) = \min_{j=\pm 1}\{\abs{q(\subspan\{av_1(x)+bv_2(x)\})}^{j}\}.
\] 
This function is measurable and non-negative, with values in $[0,1]$
(similar to the function $k$ in Lemma \ref{lem:inv-graph}). For fixed $x\in X$, 
$(a,b) \mapsto h(a,b,x)$ is continuous, and for fixed $a,b \in \C$ 
not both zero, the function $x \mapsto h(a,b,x)$ is measurable, and 
$T$-invariant. The latter is shown by an explicit computation, 
utilizing the relations between $A$ and $v_i(x)$, as well as the 
induced action of $A$ on the coordinates of the Grassmannian, 
similar to the proof of Lemma \ref{lem:inv-graph}.

Hence, since $T$ is ergodic with respect to $\mu$,  we see that
for any fixed $a,b\in \C$, 
$x \mapsto h(a,b,x)$ is almost everywhere constant. Note that if 
$h(a,b,x) = 0$, then the span of $av_1(x)+bv_2(x)$ is at one of 
the poles ($0$ or $\infty$), and if $h(a,b,x) = 1$, then the 
span of $av_1(x)+bv_2(x)$ is on the unit circle. We will be done 
if we can find $a,b \in \C$ such that the almost everywhere value of 
$h(a,b,x)$ is neither $0$ nor $1$; then the map 
\[ x \mapsto \subspan_{\C}\{av_1(x)+bv_2(x)\} \]
is exactly the equivariant family of subspaces we desire. 

To do this, let $H : \C\times\C\setminus \{(0,0)\} \to [0,1]$ be given by 
\[
H(a,b) = \int_X h(a,b,x)\ d\mu(x);
\]
then $H(a,b)$ is exactly the almost everywhere value for 
$x \mapsto h(a,b,x)$. We will show that $H$ takes a value which is 
neither $0$ nor $1$. For a fixed $(a,b)$, let $X_0 \subset X$ be a set of 
full measure such that for all $x\in X_0$, $h(a,b,x) = H(a,b)$. Let 
$(a_n,b_n) \to (a,b)$, and for each $n\in\N$, find $X_n \subset X$ 
such that $X_n$ has full measure, and for all $x\in X_n$, we have 
$h(a_n,b_n,x) = H(a_n,b_n)$. Let 
\[
\tilde{X} = \bigcap_{n=0}^\infty X_n.
\]
Then $\mu(\tilde{X}) = 1$, and for any $x \in \tilde{X}$, we have: 
\[
H(a,b) = h(a,b,x) = \lim_{n\to\infty} h(a_n,b_n,x) = \lim_{n\to\infty} H(a_n,b_n),
\] 
using the continuity of $h$ for fixed $x$. Hence $H$ is continuous.

Let $Q$ be a countable dense subset of $\C\times\C$, which does not 
include $(0,0)$. For each $(r,s)\in Q$, find $Y_{r,s} \subset X$ such that 
$\mu(Y_{r,s}) = 1$ and for all $x\in Y_{r,s}$, we have $h(r,s,x) = H(r,s)$. 
Let 
\[ Y = \bigcap_{(r,s)\in Q} Y_{r,s}. \]
Then $\mu(Y) = 1$. Let $x\in Y$. For any $(a,b) \ne (0,0)$, find 
$(r_n,s_n) \in Q$ converging to $(a,b)$, we get: 
\[
H(a,b) = \lim_{n\to\infty} H(r_n,s_n) = \lim_{n\to\infty} h(r_n,s_n,x) = h(a,b,x),
\] 
again using the continuity of $h$ for fixed $x$, as well as the continuity of $H$.

Finally, note that by definition of $v_1(x)$ and $v_2(x)$, there exists a set 
$G$ of full measure in $X$ such that $\{v_1(x),v_2(x)\}$ is a basis for $\C^2$. 
So for any $x\in G$, the map $(a,b) \mapsto h(a,b,x)$ is surjective, and hence 
takes values which are neither $0$ nor $1$. Now, $G\cap Y$ has full measure, so 
find $x\in G\cap Y$, and find $(a,b)$ such that $h(a,b,x) \notin \{0,1\}$. 
Since $x$ is also in $Y$, we have $H(a,b) = h(a,b,x) \notin \{0,1\}$, and we have 
completed the proof. 
\end{proof}

\appendix

\section{Other proofs}

\begin{proof}[Proof of Proposition \ref{prop:flip-cocycle-over-rot}]
$A(1,x)$ is a rotation by $\pi\alpha$ for $x\in[0,1-\eta)$, and a reflection 
in the horizontal axis for $x\in[1-\eta,1)$. In our notation, 
$X_r = [0,1-\eta)$, with a fixed rotation angle $\alpha_x = \pi{\alpha}$, 
and $X_f = [1-\eta,1)$, with fixed reflection axis $\beta_x = 0$. 
Substituting this into our maps $S$ and $R$ gives us: 
\begin{gather*}S(x,y) = 
\begin{cases} (x+\eta,y+\alpha) & x\in [0,1-\eta), \\
(x+\eta,1-y) & x\in [1-\eta,1); \end{cases} \\
R(x,a) = 
\begin{cases} (x+\eta,a) & x\in [0,1-\eta), \\ 
(x+\eta,a+1) & x\in [1-\eta,1). \end{cases}
\end{gather*} 
It is then sufficient to show that these maps are ergodic, as afterwards 
we simply apply Theorem \ref{thm:erg-tri}.

The first step for both of these claims will be to induce the map on a 
subset of positive measure; ergodicity of the induced map will then 
imply ergodicity of the original map. We deal first with $R$, then with $S$.

In the case of $R$ on $\T\times\Z_2$, we induce on the set 
$B = [1-\eta,1)\times\Z_2$. Since the set on which we are inducing 
is the product of $[1-\eta,1)$ and the whole space $\Z_2$, the return 
time for $R$ only depends on the map $T$ and the first coordinate $x$. 
The base map for this skew product is an irrational rotation, and 
inducing an irrational rotation on an interval of the same size 
as the rotation angle yields a map which is isomorphic to a 
different irrational rotation (see \cite{rauzy_iet}, noting that 
rotations are special cases of interval exchange transformations), 
with the rotation angle given by the fractional part of $\frac{1}{\eta}$, 
denoted $\beta$. Using this and using the fact that the only action in the 
$\Z_2$ component is when the map $R$ first leaves the set $B$, we obtain 
that the induced map of $R$ on $B$ is isomorphic to the map 
$R_B : \T\times\Z_2 \to \T\times\Z_2$, given by 
\[
R_B(x,a) = (x+\beta, a+1) = (x,a)+(\beta,1).
\]
Another well-known result (Theorem 1.9 in \cite{walters}) says that because 
$\T\times\Z_2$ is compact, $R_B$ is a rotation, and $\{(\beta,1)\}^n$ is 
dense in $\T\times\Z_2$, $R_B$ is ergodic; we are done in the case of $R$.

In the case of $S$ on $\T\times\T = \T^2$, we must do a bit more work. 
Similarly to before, we induce on the set $B=[1-\eta,1)\times\T$, and 
obtain a new map $S_B$ on $\T^2$, which is given by 
\[
S_B(x,y) = \begin{cases} (x+\beta,k\alpha-y) & x\in[1-\beta,1), \\
(x+\beta,(k-1)\alpha-y) & x\in[0,1-\beta). \end{cases}
\]
If we were to apply $S_B$ twice in a row, we would eliminate the flip 
in the $y$ coordinate. This idea is justified, because it is easy 
to show that if a map $T$ is measure-preserving and $T^2$ is ergodic, 
then $T$ is also ergodic.

Hence, we square $S_B$ to obtain the map $P = S_B^2$, given by \[P(x,y) = \begin{cases}
(x+2\beta,y) & x\in[0,1-2\beta), \\
(x+2\beta,y+\alpha) & x\in[1-2\beta,1-\beta), \\
(x+2\beta,y-\alpha) & x\in[1-\beta,1). \\
\end{cases}\] We again induce, this time on the set $[1-2\beta,1)\times\T$, 
and after another coordinate change, we get a map $Q$ acting on $\T^2$. 
Setting $\zeta$ to be the fractional part of $\frac{1}{2\beta}$, 
we see that $Q$ is given by 
\[
Q(x,y) = \begin{cases} (x+\zeta,y-\alpha) & x\in[0,\frac{1}{2}), \\
(x+\zeta,y+\alpha) & x\in[\frac{1}{2},1). \end{cases}
\]

We now appeal to a result from the literature: 

\begin{prop}[Schmidt, \cite{ks_cocycle}]
\label{prop:ks-res}
Consider the space $\T\times \alpha\Z$ as defined in the previous proposition. 
Define the map $\tilde{Q}$ on $\T\times\alpha\Z$ by 
\[
\tilde{Q}(x,n\alpha) = \begin{cases}
(x+\zeta,(n+1)\alpha) & x\in [0,\frac{1}{2}), \\
(x+\zeta,(n-1)\alpha) & x\in [\frac{1}{2},1).
\end{cases}
\]
Then $\tilde{Q}$ is an ergodic measure-preserving transformation.
\end{prop}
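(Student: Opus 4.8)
The plan is to recognise $\tilde Q$ as a $\Z$-extension of an ergodic circle rotation and to run the standard argument for ergodicity of such skew products (this is essentially Schmidt's argument). Identifying $\alpha\Z$ with $\Z$ via $n\alpha\leftrightarrow n$, the map $\tilde Q$ is the skew product over the rotation $\tau_\zeta\colon x\mapsto x+\zeta$ of $\T$ (which is ergodic because $\zeta\notin\Q$) driven by the $\Z$-valued cocycle $\phi=\mathbf 1_{[0,1/2)}-\mathbf 1_{[1/2,1)}$; it clearly preserves $\lambda\times(\text{counting on }\alpha\Z)$, which is infinite. First I would record that $\int_\T\phi\,d\lambda=0$, so Atkinson's theorem makes the cocycle recurrent and hence $\tilde Q$ conservative, and that (by Fourier decomposition in the fibre) ergodicity of $\tilde Q$ is then equivalent to: for every $t\in(0,1)$ the twisted cohomological equation
\[
g(x+\zeta)=e^{2\pi i t\,\phi(x)}\,g(x)
\]
has no non-zero solution $g\in L^2(\T)$. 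The whole problem reduces to excluding such $g$.

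So suppose $g$ solves the displayed equation for some $t\in(0,1)$. Since $|g\circ\tau_\zeta|=|g|$ and $\tau_\zeta$ is ergodic, $|g|$ is a non-zero constant and I may take $|g|\equiv 1$. Iterating gives $g(x+N\zeta)=e^{2\pi i t\,\phi_N(x)}g(x)$ with $\phi_N=\sum_{k=0}^{N-1}\phi\circ\tau_\zeta^{\,k}$. Let $p_j/q_j$ be the continued-fraction convergents of $\zeta$, so $q_j\zeta\to 0$ in $\T$. Because $\phi$ has bounded variation ($\mathrm{Var}(\phi)=4$) and mean zero, the Denjoy--Koksma inequality bounds $|\phi_{q_j}(x)|\le 4$ for all $x$ and all $j$. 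Strong continuity of translation on $L^2(\T)$ gives $g\circ\tau_\zeta^{\,q_j}\to g$ in $L^2$, hence $e^{2\pi i t\phi_{q_j}}\to 1$ in $L^2$, hence in measure; and since $\phi_{q_j}$ is integer-valued with $|\phi_{q_j}|\le 4$, this forces $\lambda\big(\{x:t\,\phi_{q_j}(x)\notin\Z\}\big)\to 0$.

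The contradiction comes from a hands-on description of the step function $x\mapsto\phi_{q_j}(x)$. It is piecewise constant with $q_j$ up-jumps of height $+2$ (at the points $-k\zeta\bmod 1$) and $q_j$ down-jumps of height $-2$ (at the points $\tfrac12-k\zeta\bmod 1$), $0\le k<q_j$, and it satisfies $\phi_{q_j}\equiv q_j\pmod 2$. For the infinitely many $j$ with $q_j$ odd (consecutive convergent denominators are never both even), $\phi_{q_j}$ is odd-valued, hence never $0$; being non-constant with jumps only $\pm2$ it takes a value in $\{-1,1\}$ on a set of positive measure, and with the help of the rational independence of $1$, $\zeta$ and $1/2$ — which makes the up- and down-jump sites interlace essentially regularly at scale $q_j$ — one upgrades this to: along a subsequence, $\phi_{q_j}$ equals some fixed non-zero integer on a set of measure bounded below. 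That contradicts $\lambda(\{t\phi_{q_j}\notin\Z\})\to 0$ for every $t\in(0,1)$ (the parity remark alone already settles $t=\tfrac12$, and the quantitative version covers the remaining small-denominator rationals $t$ and all irrational $t$). Hence no non-trivial $g$ exists, and $\tilde Q$ is ergodic.

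I expect the final step to be the real obstacle: extracting a non-trivial essential value of the cocycle that carries non-negligible mass — equivalently, the aperiodicity of $\phi$ over $\tau_\zeta$. This is the heart of Schmidt's theorem and it genuinely uses the location $1/2$ of the discontinuity (for a discontinuity inside $\Z+\Z\zeta$ the extension would not be ergodic). By contrast the earlier steps — checking that $\tilde Q$ is measure-preserving, the reduction to the twisted equation via Atkinson's theorem and fibrewise Fourier analysis, the Denjoy--Koksma bound, and the passage to convergent denominators — are all routine.
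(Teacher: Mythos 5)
The paper does not prove this proposition; it is stated with an attribution to Schmidt \cite{ks_cocycle} and used as a black box, so there is no internal argument to compare against --- what you have written is an attempt to reprove the cited result. Your outline follows the standard $L^2$ route to ergodicity of $\Z$-extensions of irrational rotations, and the early steps are sound: identifying $\tilde Q$ as the skew product over rotation by $\zeta$ with integer cocycle $\phi = \mathbf{1}_{[0,1/2)} - \mathbf{1}_{[1/2,1)}$, conservativity via Atkinson, the reduction (after normalising $|g|\equiv 1$ using ergodicity of the base) to the twisted equation $g\circ\tau_\zeta = e^{2\pi i t\phi}\,g$, the Denjoy--Koksma bound $|\phi_{q_j}|\le 4$, and the parity identity $\phi_{q_j}\equiv q_j\ (\mathrm{mod}\ 2)$.

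There is, however, a genuine gap exactly where you flag it. The $L^2$ argument gives $\lambda\bigl(\{x: t\,\phi_{q_j}(x)\notin\Z\}\bigr)\to 0$. Parity disposes of $t=\tfrac12$, and irrational $t$ because $\phi_{q_j}\neq 0$ for odd $q_j$; but for $t=\tfrac13$ (and $\tfrac23$) you need $\lambda(\{|\phi_{q_j}|= 1\})$ to be bounded away from zero along a subsequence of odd denominators, and your argument only produces positive measure at each $j$. The soft constraints you record --- mean zero, parity, the antisymmetry $\phi_{q_j}(x+\tfrac12)=-\phi_{q_j}(x)$, jumps of size $\pm 2$ --- do not exclude $\lambda(\{|\phi_{q_j}|= 3\})\to 1$. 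Supplying the uniform lower bound is precisely the content of Schmidt's theorem: it amounts to exhibiting a non-zero essential value of the cocycle and needs the three-gap combinatorics of the interleaved jump sets $\{-k\zeta\}$ and $\{\tfrac12-k\zeta\}$, not just the facts you list. Two smaller points: the phrase ``$1,\zeta,1/2$ rationally independent'' is not the right hypothesis (it is literally false, since $1$ and $\tfrac12$ are rationally dependent); what is used is $\tfrac12\notin\Z+\Z\zeta$, automatic once $\zeta$ is irrational. And the irrationality of $\zeta=\{1/(2\beta)\}$ with $\beta=\{1/\eta\}$ does follow from that of $\eta$, but the one-line chain should be stated rather than assumed.
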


To use this proposition, we translate it from the $\sigma$-finite case 
to the finite case by this proposition: 

\begin{prop}
\label{prop:sig-finite}
Suppose $\sigma:\T\to \T$ is measure-preserving and ergodic with respect 
to Lebesgue measure, and let $f:\T\to \R$ be a measurable function, with 
range $f(\T) \subset \alpha\Z$, where $\alpha$ is irrational. Let $\T^2$ 
have the usual Lebesgue product measure and Borel sets, and let 
$T_f : \T^2 \to \T^2$ be the skew product extension of $\sigma$ and 
$f$ to $\T^2$, so that: 
\[
T_f(x,y) = (\sigma(x), y + f(x)).
\] 
Let $\tilde{T}_f:\T\times\alpha\Z \to \T\times\alpha\Z$ be the skew 
product extension of $\sigma$ and $f$ to $\T\times\alpha\Z$ with the 
($\sigma$-finite) product measure $\lambda\times c$ (Lebesgue and counting, with the 
discrete $\sigma$-algebra for the counting measure), so that: 
\[
\tilde{T}_f(x,n\alpha) = (\sigma(x), n\alpha + f(x)).
\]
Then if $\tilde{T}_f$ is ergodic, so is $T_f$.
\end{prop}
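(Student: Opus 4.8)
The plan is to prove the statement directly by Fourier analysis in the fibre direction. Suppose $\tilde T_f$ is ergodic; I want to show that any $T_f$-invariant measurable set $E\subseteq\T^2$ is null or co-null. Put $\phi=\mathbf{1}_E\in L^2(\T^2)$, and for a.e.\ $x\in\T$ expand the slice $y\mapsto\phi(x,y)$ as a Fourier series with coefficients $\phi_k(x)=\int_\T\phi(x,y)e^{-2\pi iky}\,dy$; these are measurable in $x$, bounded by $1$, and satisfy $\sum_k\norm{\phi_k}_{L^2(\T)}^2=\norm{\phi}_{L^2(\T^2)}^2$. The first step is to rewrite $T_f$-invariance of $\phi$ in terms of the $\phi_k$: by Fubini's theorem there is a full-measure set of $x$ on which $\phi(\sigma x,\,y+f(x))=\phi(x,y)$ holds for a.e.\ $y$, and comparing $k$-th Fourier coefficients in $y$ there yields, for every $k\in\Z$,
\[
e^{2\pi ikf(x)}\,\phi_k(\sigma x)=\phi_k(x).
\]

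Next I would treat the modes separately. For $k=0$ the relation reads $\phi_0\circ\sigma=\phi_0$, so ergodicity of $\sigma$ forces $\phi_0$ to be a.e.\ constant. For $k\neq 0$, the point is to push the relation into the ergodicity of $\tilde T_f$. Since $f(\T)\subseteq\alpha\Z$ with $\alpha\neq0$, the function $h:=f/\alpha\colon\T\to\Z$ is measurable and $e^{2\pi ikf(x)}=e^{2\pi ik\alpha h(x)}$. Define $\Psi_k\colon\T\times\alpha\Z\to\C$ by $\Psi_k(x,m\alpha)=\phi_k(x)\,e^{2\pi ikm\alpha}$; it is measurable, since for each fixed $m$ it is a constant multiple of $\phi_k$ and $\alpha\Z$ carries the discrete $\sigma$-algebra. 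Using the displayed relation,
\begin{align*}
\Psi_k\big(\tilde T_f(x,m\alpha)\big)
&=\phi_k(\sigma x)\,e^{2\pi ik\,(m+h(x))\alpha}\\
&=\big(e^{2\pi ik\alpha h(x)}\,\phi_k(\sigma x)\big)\,e^{2\pi ikm\alpha}
=\phi_k(x)\,e^{2\pi ikm\alpha}=\Psi_k(x,m\alpha),
\end{align*}
so $\Psi_k$ is $\tilde T_f$-invariant off a $(\lambda\times c)$-null set. By ergodicity of $\tilde T_f$, $\Psi_k$ is a.e.\ equal to a constant $c_k$; taking $m=0$ gives $\phi_k\equiv c_k$, and then $m=1$ gives $c_k\,e^{2\pi ik\alpha}=c_k$. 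As $\alpha$ is irrational and $k\neq0$ we have $e^{2\pi ik\alpha}\neq1$, hence $c_k=0$ and $\phi_k\equiv0$. Consequently every Fourier coefficient of $\phi$ is an a.e.\ constant and all but $\phi_0$ vanish, so $\phi=\mathbf{1}_E$ is a.e.\ constant and $\lambda^2(E)\in\{0,1\}$; thus $T_f$ is ergodic.

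I expect the only real obstacle to be measure-theoretic bookkeeping rather than anything conceptual: one must make sure the Fourier-coefficient identity holds for all $k$ on a single common full-measure set of $x$ (which comes for free from applying Fubini once, then matching Fourier coefficients of two $L^2(\T)$ functions that agree a.e.), and one must track null sets when moving between the finite space $(\T^2,\lambda^2)$ and the $\sigma$-finite space $(\T\times\alpha\Z,\lambda\times c)$ — in particular that $N\times\alpha\Z$ is $(\lambda\times c)$-null whenever $\lambda(N)=0$, so that $\Psi_k$ is genuinely $\tilde T_f$-invariant. One also uses the standard fact that a $\sigma$-finite measure-preserving transformation is ergodic iff every invariant measurable function is a.e.\ constant, which is what justifies concluding $\Psi_k\equiv c_k$.
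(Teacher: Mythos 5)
Your proof is correct, and it takes a genuinely different route from the paper's. The paper works directly with a bounded $T_f$-invariant function $h$ on $\T^2$: for each $y$ it introduces the fibre map $\pi_y(x,n\alpha)=(x,y+n\alpha)$, pulls $h$ back to a $\tilde T_f$-invariant function $\tilde h_y$ on $\T\times\alpha\Z$, and concludes each $\tilde h_y$ is a.e.\ constant by ergodicity of $\tilde T_f$; it then integrates out $x$ to get $I(y)=\int_0^1 h(x,y)\,dx$, shows $I(y+\alpha)=I(y)$ using that constancy, and invokes ergodicity of the irrational rotation $y\mapsto y+\alpha$ to finish. You instead Fourier-decompose the invariant indicator $\phi$ in the fibre variable, obtain the mode-by-mode cocycle identity $e^{2\pi i k f(x)}\phi_k(\sigma x)=\phi_k(x)$, and then observe that each $\Psi_k(x,m\alpha)=\phi_k(x)e^{2\pi i k m\alpha}$ is $\tilde T_f$-invariant precisely because $f$ takes values in $\alpha\Z$; ergodicity of $\tilde T_f$ then kills every mode $k\neq 0$ via $c_k e^{2\pi i k\alpha}=c_k$ with $e^{2\pi i k\alpha}\neq 1$. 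Both proofs use the same two ingredients (ergodicity of $\tilde T_f$ in the $\sigma$-finite sense, and irrationality of $\alpha$), but your version makes the role of the $\alpha\Z$ hypothesis more transparent, since it is exactly what lets $\Psi_k$ descend to a well-defined invariant function on $\T\times\alpha\Z$; the paper's version avoids Fourier series entirely and applies verbatim to an arbitrary bounded invariant $h$ with a single invocation of the $\sigma$-finite ergodicity (plus an extra use of ergodicity of the circle rotation), rather than one invocation per Fourier mode. Your bookkeeping remarks are accurate; in particular the common full-measure set of $x$ for all $k$ does follow from one application of Fubini plus uniqueness of Fourier coefficients, and $N\times\alpha\Z$ is indeed $(\lambda\times c)$-null when $\lambda(N)=0$ since it is a countable union of the null slices $N\times\{m\alpha\}$.
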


\begin{proof}
Let $h:\T^2\to \R$ be a bounded measurable function invariant under 
$T_f$, so $h\circ T_f = h$. We shall show that $h$ must be a.e. constant; 
this will imply that $T_f$ is ergodic. For $y\in \T$, define the 
measurable map 
\[
\pi_y: \T\times\alpha\Z \to \T^2,\ \pi_y(x,n\alpha) = (x,y + n\alpha).
\] 
Then we see that $T_f\circ\pi_y = \pi_y\circ\tilde{T}_f$. In addition, 
define $\tilde{h}_y = h\circ\pi_y$, so that $\tilde{h}_y$ is a measurable 
function defined on $\T\times \alpha\Z$. Since $\pi_y$ intertwines the 
dynamics on the two spaces, we get the following:
\[
\tilde{h}_y\circ\tilde{T}_y = h\circ\pi_y\circ\tilde{T}_f = 
h\circ T_f\circ\pi_y = h\circ\pi_y = \tilde{h}_y.
\]
Thus $\tilde{h}_y$ is invariant under $\tilde{T}_f$, and so is constant 
a.e. with respect to the product measure $\lambda\times c$, since $\tilde{T}_f$ is ergodic.

We wish to use the fact that $\tilde{h}_y$ is a.e. constant for each 
$y\in\T$ to show that $h$ is constant a.e. To do this, we make an intermediate 
step. Define 
\[
I : \T \to \R,\ I(y) = \int_{0}^{1}{h(x,y)\ dx} = \int_0^1 \tilde{h}_y(x,0)\ dx.
\] 
Because $h$ is bounded, $I$ is finite, and by Fubini's theorem, $I$ is 
measurable. Moreover, we have the following, since $\tilde{h}_y$ is 
a.e. constant on $\T\times\alpha\Z$:
\begin{align*}
I(y+\alpha) & = \int_{0}^{1}{h(x,y+\alpha)}\ dx = 
\int_0^1{\tilde{h}_y(x,\alpha)}\ dx = \int_0^1{\tilde{h}_y(x,0)}\ dx \\
& = \int_{0}^{1}{h(x,y)}\ dx = I(y).
\end{align*}
The map $y \mapsto y+\alpha$ is ergodic on $\T$, thus we see that $I$ is a.e. 
constant on $\T$; write $I(y) = C$ for a.e. $y\in\T$. Note that for all $y$, 
$\tilde{h}_y$ is a.e. constant on $\T\times\alpha\Z$, so we see that for a.e. 
$x\in\T$, $\tilde{h}_y(x,0) = I(y)$. Denote $Y_G = \set{y\in\T}{I(y) = C}$; 
this set has full measure in $\T$. If $y\in Y_G$, then for a.e. $x$, 
$h(x,y) = \tilde{h}_y(x,0) = C$. Computing the measure of the set of 
points where $h\ne C$ via Fubini's Theorem yields the final statement: 
$h = C$ almost everywhere. Hence $T_f$ is ergodic.
\end{proof}

$Q$ and $\tilde{Q}$ are related in exactly the correct way for Proposition 
\ref{prop:sig-finite} to be applied, and so $Q$ is ergodic. This proves 
that the original map $S$ is ergodic, and so we are finished.
\end{proof}

\begin{proof}[Proof of Proposition \ref{prop:flip-cocycle-over-shift}]
The proof that $X\times\C^2$ is irreducible is similar, in outline, to the proof 
that the cocycle in Example 2 is not triangularizable. We give only a brief 
sketch and leave the details to \cite{horan_thesis}.

The map $R$, on $X\times\Z_2$, is shown to be ergodic by inducing on the product 
of $\Z_2$ with the set of binary strings with $1$ as the $0$-th 
component, and obtaining the product of a Bernoulli shift on countably 
many symbols ($\N$), which is strongly mixing, and the translation by $1$ on 
$\Z_2$, which is ergodic. Their product is therefore ergodic, and so the map 
$R$ is ergodic.

The map $S$, on $X\times\T$, requires a similar process to that in Example 2. 
After inducing, squaring, and inducing again, the resulting map is a skew 
product over a Bernoulli shift on strings with symbols from $\N\times\N$, 
where the action on the torus $\T$ is a rotation determined by the 
$0$-th component of the string. One then applies Fourier 
analysis techniques to show that this new map is strongly mixing, 
and hence the map $S$ is ergodic.
\end{proof}

\section{Counter-examples to necessity of conditions in Theorem \ref{thm:erg-tri}}

As mentioned in Remark \ref{rem:necessary}, none of the sufficient conditions
listed in Theorem \ref{thm:erg-tri} are necessary. We will illustrate with some simple counterexamples.

For condition (1), consider the system $(\T,\cB,\lambda,T)$, where $\T$ is the 
$1$-torus $\R/\Z$ equipped with the usual Borel $\sigma$-algebra and 
normalized Lebesgue measure and $T(x) = x+\eta$ is an irrational 
rotation by $\eta \in [0,1)\setminus\Q$, and let 
$A(1,x) = \mathrm{rot}\left(\frac{\pi}{3}\right).$ In this case, the map 
$S : \T\times\T \to \T\times\T$ is given by \(S(x,y) = \left(x+\eta,y+\frac{1}{3}\right)\).
It is easy to see that this map is not ergodic: the set 
\(B = \T\times \left( \left(0,\frac{1}{6}\right) \cup \left(\frac{1}{3},
\frac{1}{2}\right) \cup \left(\frac{2}{3},\frac{5}{6}\right) \right) \) is 
$S$-invariant, and has measure $\frac{1}{2}$. However, the cocycle $A$ is irreducible over $\R$.
To see this, consider the projection $\id\times\pi_3$, where $\pi_3 : \T\to \Z_3$ is given by 
$\pi_3(y) = \lfloor 3y \rfloor$. This projection induces is a factor map 
$\tilde{S} : \T\times\Z_3 \to T\times\Z_3$ for $S$, given by 
$\tilde{S}(x,a) = (x+\eta,a+1)$. This map \emph{is} ergodic, which prevents the existence of a 
real equivariant family of subspaces for $A$, because the resulting invariant graph for $S$
factors to an invariant graph for $\tilde{S}$, and this contradicts the fact that $\tilde{S}$ 
is ergodic.

The same example can be used to show that condition (3) is also not 
necessary; the associated map $R : \T\times\Z_2 \to \T\times\Z_2$ is just $R(x,a) = (x+\eta,a)$,
which is also not ergodic. However, the cocycle is not cohomologous to a
scalar multiple of the identity. Suppose it were; following the proof of condition (3) 
in Theorem \ref{thm:erg-tri}, we obtain an equivariant family of subspaces which yields an invariant
graph for $S$. This yields an invariant graph for $\tilde{S}$, which 
contradicts the ergodicity of $\tilde{S}$.

For condition (2), modify the cocycle above as in Section \ref{subsect:flip-cocycle-over-shift}.
Let $(\Omega,\mu,\sigma)$ be the left Bernoulli shift on $\{0,1\}^{\Z}$, where $0$ and $1$ are both given weight $\frac{1}{2}$, and let $A$ be the cocycle over $\sigma$ generated by 
\[ 
A(1,\omega) = \begin{cases} \ \begin{bmatrix}
\cos(\frac{\pi}{3}) & -\sin(\frac{\pi}{3}) \\
\sin(\frac{\pi}{3}) & \cos(\frac{\pi}{3})
\end{bmatrix} & \omega_0 = 0, \\[20pt]
\ \begin{bmatrix}
1 & 0 \\
0 & -1
\end{bmatrix} & \omega_0 = 1.
\end{cases}
\] 
In this case, the map $R$ is still ergodic, but $S$ is not ergodic, because 
$B = \T\times \left( \left(\frac{1}{9},\frac{2}{9}\right) \cup \left(\frac{4}{9},
\frac{5}{9}\right) \cup \left(\frac{7}{9},\frac{8}{9}\right) \right)$ is $S$-invariant,
with measure $\frac{1}{3}$. Once again, using a similar construction to the above (taking care of the points 
$0,\frac{1}{3},\frac{2}{3}$ separately), we obtain a similar factor map $\tilde{S}$ of $S$, which is ergodic 
(by a similar proof to that of Proposition 
\ref{prop:flip-cocycle-over-shift}). Applying the arguments used in the proof of (2) in Theorem
\ref{thm:erg-tri}, but to $R$ and $\tilde{S}$ instead of $R$ and $S$, we see that $A$ has no equivariant
family of one-dimensional subspaces over $\C$. Therefore, condition (2) is not necessary.

\bibliographystyle{abbrv}
\bibliography{ergbib}

\end{document}